\newtheorem{theo}{Theorem}
\newtheorem{prop}{Proposition}  
\newtheorem{coro}{Corollary}
\newtheorem{rem}{Remark}
\newtheorem{lema}{Lemma}
 \global\long\def\sbr#1{\left[ #1\right] }
 \global\long\def\cbr#1{\left\{  #1\right\}  }
 \global\long\def\rbr#1{\left(#1\right)}
 \global\long\def\R{\mathbb{R}}
 \global\long\def\dd#1{\textnormal{d}#1}
 \global\long\def\ab{[a,b]}
 \global\long\def\ra{\rightarrow}
 \global\long\def\TTV#1#2#3{\text{TV}^{#3}\!\rbr{#1,#2}}
\global\long\def\TTVemph#1#2#3{\emph{TV}^{#3}\!\rbr{#1,#2}}
 \global\long\def\V#1#2#3{\text{V}^{#3}\!\rbr{#1,#2}}
 \global\long\def\ns{\infty}
 \global\long\def\f{:\left[a,b\right]\ra\R}
 \global\long\def\TV{\text{TV}}
\newenvironment{proof}{\noindent{\bf Proof.}}{\rightline{$\blacksquare$}}
\title{A~new inequality for the Riemann-Stieltjes integrals driven by irregular signals in Banach spaces}
\author{Rafa{\l{}} M. \L{}ochowski}
\begin{document}

\selectlanguage{british}%

\maketitle

\abstract{We prove an inequality of the Lo\'{e}ve-Young type for the  Riemann-Stieltjes integrals driven by irregular signals attaining their values in Banach spaces and, as a result, we derive a new theorem on the existence of the Riemann-Stieltjes integrals driven by such signals. Also, for any $p\ge1$ we introduce the space of regulated signals $f:\ab\ra W$ ($a<b$ are real numbers and $W$ is a Banach space), which may be uniformly approximated with
accuracy $\delta>0$ by signals whose total variation is of order
$\delta^{1-p}$ as $\delta\ra0+$ and prove that they satisfy the assumptions of the theorem. Finally, we derive more exact, rate-independent characterisations of the irregularity of the integrals driven by such signals.}
\smallskip

\noindent\emph{Keywords:} regulated path, total variation, $p$-variation, truncated variation, the Riemann-Stieltjes integral, the Lo\'{e}ve-Young inequality, Banach space.\\
\emph{Mathematics Subject Classification (2010):} 46B99, 46G10.

\section{Introduction}

The first aim of this paper is a generalisation of the results of \cite{LochowskiGhomrasniMMAS:2015}
and \cite{LochowskiJIA:2015} to the functions attaining
their values not only in $\R$ but in more general spaces. Next, to
obtain more precise results, for any $p\ge1$ we introduce the space ${\cal U}^{p}\left(\left[a,b\right], W\right)$
of regulated functions/signals $f:\ab\ra W$ ($a<b$ are real numbers and
$W$ is a Banach space), which may be uniformly approximated with
accuracy $\delta>0$ by functions whose total variation is of order
$\delta^{1-p}$ as $\delta\ra0+.$ This way we will obtain a result about the existence of the Riemann-Stieltjes integral $\int_a^b f \dd g$ for functions from ${\cal U}^{p}\left(\left[a,b\right]\right)$ and ${\cal U}^{q}\left(\left[a,b\right]\right)$ whenever $p,q>1,$ $p^{-1}+q{-1}>1.$ 
Results of this type were earlier obtained by Young \cite{Young:1936}, \cite{Young:1938} and D'ya\v{c}kov \cite{Dyackov:1988} (for very detailed account see \cite[Chapt. 3]{NorvaisaConcrete:2010}) but they were  expressed in terms of $p$- or (more general) $\phi$-variations. 

In \cite{LochowskiGhomrasniMMAS:2015} the following variational problem
was considered: given real $a<b,$ $c>0,$ a regulated function/signal
$f\f$ (for the definition of a regulated function see the next section)
and $x\in\left[f\left(a\right)-c/2,f\left(a\right)+c/2\right],$ find
the infimum of total variations of all functions $f^{c,x}\f$ which uniformly
approximate $f$ with accuracy $c/2,$ 
\[
\left\Vert f-f^{c,x}\right\Vert _{\ab,\ns}:=\sup_{a\le t\le b}\left|f\left(t\right)-f^{c,x}\left(t\right)\right|\le c/2,
\]
and start from $x,$ $f^{c,x}\left(a\right)=x.$ Recall that for
$g\f$ its total variation is defined as 
\[
\TTV g{\ab}{}:=\sup_{n}\sup_{a\le t_{0}<t_{1}<\ldots<t_{n}\le b}\sum_{i=1}^{n}\left|g\left(t_{i}\right)-g\left(t_{i-1}\right)\right|.
\]
This infimum is well approximated by the \emph{truncated variation}
of $f,$ defined as 
\begin{equation}
\TTV f{\ab}c:=\sup_{n}\sup_{a\le t_{0}<t_{1}<\ldots<t_{n}\le b}\sum_{i=1}^{n}\max\left\{ \left|f\left(t_{i}\right)-f\left(t_{i-1}\right)\right|-c,0\right\} ,\label{eq:tv_def}
\end{equation}
and the following bounds hold 
\[
\TTV f{\ab}c\le\inf_{f^{c,x}\in B\left(f,c/2\right),f^{c,x}\left(a\right)=x}\TTV{f^{c,x}}{\ab}{}\le\TTV f{\ab}c+c,
\]
where $B\left(f,c/2\right):=\left\{ g:\left\Vert f-g\right\Vert _{\ab,\ns}\le c/2\right\} $
(see \cite[Theorem 4 and Remark 15]{LochowskiGhomrasniMMAS:2015}).
Moreover, we have 
\begin{equation}
\inf_{f^{c}\in B\left(f,c/2\right)}\TTV{f^{c}}{\ab}{}=\TTV f{\ab}c\label{eq:tv_infimum}
\end{equation}
Unfortunately, this result is no more valid for functions attaining
their values in more general metric spaces. 
\begin{rem} \label{koles} It is not difficult
to see that (\ref{eq:tv_infimum}) does not hold even for $f$ attaining
its values in $\R^{2}$ with with $|\cdot|$ understood as the Euclidean
norm in $\R^{2}.$ Indeed, let $f:\left[0,2\right]\ra\R^{2}$ be defined
with the formula $f\left(t\right)=\left(\cos\left(2\pi\left\lfloor t\right\rfloor /3\right),\sin\left(2\pi\left\lfloor t\right\rfloor /3\right)\right).$
We have $\TTVemph f{\left[0,2\right]}{\sqrt{3}}=0,$ but there exist no
sequence of functions $f_{n}:\left[0,2\right]\ra\R^{2},$ $n=1,2,\ldots,$
such that $\left\Vert f-f_{n}\right\Vert _{[0,2],\ns}\le\sqrt{3}/2$
and $\lim_{n\ra+\ns}\TTVemph {f_{n}}{\left[0,2\right]}{}=0.$ Thus $\inf_{f^{c}\in B\left(f,c/2\right)}\TTVemph{f^{c}}{\ab}{}>\TTVemph f{\ab}c.$
\end{rem} 
Remark \ref{koles} answers (negatively) the question posed few years ago by Krzysztof Oleszkiewicz, if the truncated variation is the greatest lower bound for the total variation of functions from $B\rbr{f,c/2}$ attaining values in $\R^d,$ $d=2,3,\ldots$ or in other spaces than $\R.$
Fortunately, it is possible to state an easy estimate of the
left side of (\ref{eq:tv_infimum}) in terms of the truncated variation
of $f,$ for $f$ attaining its values in \emph{any} metric space
(to define the total variation and the truncated variation of $f$
attaining its values in the metric space $\left(E,d\right)$ we just
replace $|f\left(t_{i}\right)-f\left(t_{i-1}\right)|$ by the distance
$d\left(f\left(t_{i}\right),f\left(t_{i-1}\right)\right)$); see Theorem
\ref{Fact_multi_dim}.

One of the applications of Theorem \ref{Fact_multi_dim} will be the
generalisation of the results of \cite{LochowskiJIA:2015}
on the existence of the Riemann-Stieltjes integral. We will consider
the case when the integrand and the integrator attain their values
in Banach spaces. The restriction to the Banach spaces stems from
the fact that the method of our proof requires multiple application
of summation by parts and proceeding to the limit of a Cauchy sequence, which may be done in a straightforward way in any Banach space. This way we will obtain
a general theorem on the existence of the Riemann-Stieltjes integral
along a path in some Banach space $\left(E,\left\Vert \cdot\right\Vert _{E}\right)$
(with the integrand being a path in the space $L\left(E,V\right)$
of continuous linear mappings $F:E\ra V,$ where $V$ is another
Banach space) as well as an improved version of the Lo\'{e}ve-Young inequality
for integrals driven by irregular paths in this space.

The famous Lo\'{e}ve-Young inequality may be stated as follows. If $f:\ab\ra L\left(E,V\right)$
and $g:\ab\ra E$ are two regulated functions with no common points
of discontinuity and $f$ and $g$ have finite $p$- and $q$-variations
respectively, where $p > 1,$ $q > 1$ and $p^{-1}+q^{-1}>1,$ then the
Riemann-Stieltjes integral $\int_{a}^{b}f\dd g$ exists and one has
the following estimate 
\begin{equation}
\left\Vert \int_{a}^{b}f\dd g-f\left(a\right)\left[g\left(b\right)-g\left(a\right)\right]\right\Vert \le\tilde{C}_{p,q}\left(V^{p}\left(f,[a,b]\right)\right)^{1/p}\left(V^{q}\left(g,[a,b]\right)\right)^{1/q}.\label{eq:LYinequality}
\end{equation}
Here 
\[
V^{p}\left(f,[a,b]\right):=\sup_{n}\sup_{a\le t_{0}<t_{1}<\ldots<t_{n}\le b}\sum_{i=1}^{n}\left\Vert f\left(t_{i}\right)-f\left(t_{i-1}\right)\right\Vert _{L\left(E,V\right)}^{p},
\]

\[
V^{q}\left(g,[a,b]\right):=\sup_{n}\sup_{a\le t_{0}<t_{1}<\ldots<t_{n}\le b}\sum_{i=1}^{n}\left\Vert g\left(t_{i}\right)-g\left(t_{i-1}\right)\right\Vert _{E}^{q}
\]
denote $p$- and $q$-variation of $f$ and $g$ respectively (sometimes called the {\em strong} variation). The
original Lo\'{e}ve-Young estimate, with the constant $\tilde{C}_{p,q}=1+\zeta\left(1/p+1/q\right),$
where $\zeta$ is the famous Riemann zeta function, was formulated
for real functions in \cite{Young:1936}. The counterpart of this
inequality for more general, Banach space-valued functions, with the
constant $\tilde{C}_{p,q}=4^{1/p+1/q}\zeta\left(1/p+1/q\right),$
is formulated in the proof of \cite[Theorem 1.16]{LyonsCaruana:2007}.
Our, improved version of (\ref{eq:LYinequality}) is the following
\begin{align*}
& \left\Vert \int_{a}^{b}f\dd g-f\left(a\right)\left[g\left(b\right)-g\left(a\right)\right]\right\Vert \\
& \le C_{p,q}\left(V^{p}\left(f,[a,b]\right)\right)^{1-1/q}\left\Vert f\right\Vert _{\text{osc},\ab}^{1+p/q-p}\left(V^{q}\left(g,[a,b]\right)\right)^{1/q},
\end{align*}
where $\left\Vert f\right\Vert _{\text{osc},\ab}:=\sup_{a\le s<t\le b}\left\Vert f\left(s\right)-f\left(t\right)\right\Vert _{L\left(E,V\right)}$ and $C_{p,q}$ is a universal constant depending on $p$ and $q$ only.
Notice that always \[
\left(V^{p}\left(f,[a,b]\right)\right)^{1/p-\left(1-1/q\right)}\ge\left\Vert f\right\Vert _{\text{osc},\ab}^{1+p/q-p}.
\]

These results may be applied for example 
when $f$ and $g$ are trajectories of $\alpha$-stable processes $X^1,$ $X^2$ with $\alpha \in (1,2).$ However, since the obtained results are formulated
in terms of rate-independent functionals, like the truncated variation
or $p$-variation, they remain valid when $f\left(t\right)=F\left(X^1\left(A\left(t\right)\right)\right)$
and $g\left(t\right)=G\left(X^2\left(B\left(t\right)\right)\right)$ 
(with the technical assumption that the jumps of $f$ and $g$ do not occur at the same time) 
where $A,B:\left[0,+\ns\right)\ra\left[0,+\ns\right)$ are piecewise monotonic, possibly random, changes of time (i.e. there exist $0=T_{0}<T_{1}<\ldots$
such that $T_{n}\ra+\ns$ almost surely as $n\ra+\ns$ and $A$ and $B$
are monotonic on each interval $\left(T_{i-1},T_{i}\right),$ $i=1,2,\ldots$), while $F,G:\R\ra\R$ are locally Lipschitz. 

It appears that it is possible to derive weaker conditions under which
the improved Lo\'{e}ve-Young inequality still holds, and we will prove
that it still holds (and the Riemann-Stieltjes integral $\int_{a}^{b}f\dd g$
exists) for functions $f$ and $g$ with no commont poins of discontinuity,
satisfying 
\[\sup_{\delta>0}\delta^{p-1}\TTV f{\ab}{\delta}<\ns \text{ and }
\sup_{\delta>0}\delta^{q-1}\TTV g{\ab}{\delta}<\ns
\] respectively. 
Moreover, in such a case the indefinite integral $ I\left(t\right):=\int_{a}^{t}f\dd g$
reveals similar irregularity as the integrator $g,$ namely, $\sup_{\delta>0}\delta^{q-1}\TTV I{\ab}{\delta}<\ns.$ 
We will also prove that for any $p \ge 1$ the 
class of functions
$f:\ab\ra W,$ where $W$ is some Banach space, such that $\TTV f{\ab}{\delta}=O\left(\delta^{1-p}\right)$
as $\delta\ra0+,$ is a Banach space. We will denote it by ${\cal U}^{p}\rbr{[a,b],W}.$ The property $f \in {\cal U}^{p}\rbr{[a,b],W}$ is weaker than the existence of  $p$-variation  but stronger than the existence of $q$-variation for some $q>p.$

From early work of Lyons \cite{Lyons:1994} it is well known that whenever $f$ and $g$ have finite $p$- and $q$-variations
respectively,  $p > 1,$ $q > 1$ and $p^{-1}+q^{-1}>1,$ then 
the indefinite integral $I(\cdot)$ has finite $q$-variation. However, it is also
well known that a symmetric $\alpha$-stable process $X$  with $\alpha \in [1,2]$ has finite $p$-variation for any $p>\alpha$ while
its $\alpha$-variation is infinite (on any proper, compact subinterval
of $\left[0,+\ns\right)$), see for example \cite[Theorem 4.1]{BlumenthalGetoor:1960}. 
Thus, if for example $f\left(t\right)=F\left(X^1\left(A\left(t\right)\right)\right)$
and $g\left(t\right)=G\left(X^2\left(B\left(t\right)\right)\right)$
are like in a former paragraph, we can say that $I\left(\cdot\right)$
has finite $p$-variation, on any compact subinterval
of $\left[0,+\ns\right)$ for any $p>\alpha,$ but can not say much more.
From our results it will follow that $I\left(\cdot\right)\in{\cal U}^{\alpha}\rbr{[0,t],\R}$ for any $t\ge 0.$ As far as we know, no such result is known in the case when the integrator has finite $\phi$-variation except the already mentioned case $\phi(x) = |x|^q.$

Let us comment on the organization of the paper. In the next section we prove very general estimates for $\inf_{g \in B\rbr{f,c/2}} \TTV g{[a,b]}{},$ for regulated $f:\ab \ra E,$ where $E$ is any metric space, in terms of the truncated variation of $f.$ Next, in the third section, we use obtained estimates to prove a new theorem on the existence of the Riemann-Stieltjes integral driven by irregular paths in Banach spaces. In the proofs we follow closely \cite{LochowskiJIA:2015}. In  the last, fourth section we introduce the Banach spaces ${\cal U}^{p}\rbr{[a,b],W},$ $p\ge 1,$ (subsection 4.1) and in subsection 4.2 obtain more exact estimates of the rate-independent irregularity of functions from these spaces (in terms of $\phi-$variation). In the last subsection we deal with the irregularity of the integrals driven by signals from the spaces ${\cal U}^{p}\rbr{[a,b],W},$ $p\ge 1.$

\section{Estimates for the variational problem}

Let $\left(E,d\right)$ be a metric space with the metric $d$. For
given reals $a<b$ we say that the function $f:\ab\ra E$ is \emph{regulated}
if it has right limits $f\left(t+\right)$ for any $t\in\left[a,b\right)$
and left limits $f\left(t-\right)$ for any $t\in\left(a,b\right].$
If $E$ is complete then a necessary and sufficient condition for
$f$ to be regulated is that it is an uniform limit of step functions
(see \cite[Theorem 7.6.1]{Dieudonne:1969}).

Let $f:\left[a,b\right]\ra E$ be regulated. For $c>0$ let us consider
the family $B\left(f,c/2\right)$ of \emph{all} functions $g:\ab\ra E$
such that $\sup_{t\in\ab}d\left(f\left(t\right),g\left(t\right)\right)\le c/2.$
We will be interested in the followng variational problem: find 
\begin{equation}
\inf_{g\in B\left(f,c/2\right)}\TTV g{\left[a,b\right]}{},\label{eq:infimum}
\end{equation}
where 
\[
\TTV g{\ab}{}:=\sup_{n}\sup_{a\le t_{0}<t_{1}<\ldots<t_{n}\le b}\sum_{i=1}^{n}d\left(g\left(t_{i}\right),g\left(t_{i-1}\right)\right).
\]

To state our first main result let us define 
\[
\TTV g{\ab}c:=\sup_{n}\sup_{a\le t_{0}<t_{1}<\ldots<t_{n}\le b}\sum_{i=1}^{n}\max\left\{ d\left(g\left(t_{i}\right),g\left(t_{i-1}\right)\right)-c,0\right\} .
\]

\begin{theo} \label{Fact_multi_dim} For any regulated {$f:\left[a,b\right]\ra E$
}there exists a step function $f^{c}:\ab\ra E$ such that $\sup_{t\in\ab}d\left(f\left(t\right),f^{c}\left(t\right)\right)\le c/2$
and for any $\lambda>1,$ $\TTVemph{f^{c}}{\left[a,b\right]}{}\le\lambda\cdot\TTVemph f{\ab}{\left(\lambda-1\right)c/\left(2\lambda\right)}.$
Thus the following estimates hold 
\[
\TTVemph f{\ab}c\le\inf_{g\in B\left(f,c/2\right)}\TTVemph g{\left[a,b\right]}{}\le\inf_{\lambda>1}\lambda\cdot\TTVemph f{\ab}{\left(\lambda-1\right)c/\left(2\lambda\right)}.
\]
In particular, taking $\lambda=2$ we get the double-sided estimate
\[
\TTVemph f{\ab}c\le\inf_{g\in B\left(f,c/2\right)}\TTVemph g{\left[a,b\right]}{}\le2\cdot\TTVemph f{\ab}{c/4}.
\]

Moreover, if $E$ is a vector normed space with the norm $\left\Vert \cdot\right\Vert _{E}$
then there exists $f^{c,lin}:\ab\ra E$ such that $f^{c,lin}$ is
piecewise linear, jumps of $f^{c,lin}$ occur only at the points where
the jumps of $f$ occur, $\sup_{t\in\ab}\left\Vert f\left(t\right)-f^{c,lin}\left(t\right)\right\Vert _{E}\le c$
and $\TTVemph{f^{c,lin}}{\left[a,b\right]}{}=\TTVemph{f^{c}}{\left[a,b\right]}{}.$
\end{theo}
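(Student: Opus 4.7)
The plan is a greedy construction of $f^c$: set $\tau_0 := a$ and recursively
\[
\tau_{k+1} := \inf\cbr{t \in (\tau_k, b] : d\rbr{f(t), f(\tau_k)} > c/2},
\]
with $\tau_{k+1} := b$ if the set on the right is empty. Because $f$ is regulated (hence bounded and possessing one-sided limits), the sequence $(\tau_k)$ must reach $b$ in finitely many steps, say $N$: otherwise $\tau_k \uparrow \tau^* \le b$, but then $f(\tau_k) \to f(\tau^*-)$, which contradicts the ``large jump'' property of consecutive values $f(\tau_k)$ built into the recursion. I would then define $f^c(t) := f(\tau_k)$ for $t \in [\tau_k, \tau_{k+1})$ and $f^c(b) := f(b)$; by the defining property of $\tau_{k+1}$, the uniform bound $d(f(t), f^c(t)) \le c/2$ is immediate.

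The variational estimate rests on showing that each jump $d_k := d(f(\tau_{k+1}), f(\tau_k))$ of $f^c$ has size at least $c/2$, which for $\tau_{k+1} < b$ follows by extracting $t_n \downarrow \tau_{k+1}$ with $d(f(t_n), f(\tau_k)) > c/2$ and passing to the right limit, with a small $\epsilon$-perturbation of the threshold (replacing $c/2$ by $c/2 - \epsilon$ and then letting $\epsilon \to 0$) used to absorb any possible right-discontinuity of $f$ at $\tau_{k+1}$, as in \cite{LochowskiGhomrasniMMAS:2015}. Setting $\alpha := (\lambda - 1)c/(2\lambda) < c/2$ and evaluating $\TTV f{\ab}\alpha$ on the partition $\tau_0 < \tau_1 < \ldots < \tau_N$ yields
\[
\TTV f{\ab}\alpha \ge \sum_k (d_k - \alpha) = \sum_k d_k - N\alpha \ge \rbr{1 - \frac{2\alpha}{c}}\sum_k d_k = \frac{1}{\lambda}\sum_k d_k,
\]
where the counting estimate $Nc/2 \le \sum_k d_k$ is used in the penultimate step. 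Since $\TTV{f^c}{\ab}{} \le \sum_k d_k$, the required bound $\TTV{f^c}{\ab}{} \le \lambda \TTV f{\ab}\alpha$ follows uniformly in $\lambda > 1$. The matching lower bound $\TTV f{\ab}c \le \inf_{g \in B(f, c/2)} \TTV g{\ab}{}$ is elementary: for any $g \in B(f, c/2)$, the triangle inequality gives $d(f(t_i), f(t_{i-1})) \le d(g(t_i), g(t_{i-1})) + c$, so $\max\cbr{d(f(t_i), f(t_{i-1})) - c, 0} \le d(g(t_i), g(t_{i-1}))$, and summing over an arbitrary partition yields the claim.

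For the normed-space refinement, I would modify $f^c$ at each $\tau_k$ which is \emph{not} a jump point of $f$: on a small interval $[\tau_k - \delta_k, \tau_k)$, replace the constant value $f(\tau_{k-1})$ by the straight-line segment from $f(\tau_{k-1})$ to $f(\tau_k)$, thereby removing the jump of $f^c$ at $\tau_k$. Since a linear segment and a jump between the same endpoints have identical total variation in a normed space, $\TTV{f^{c,lin}}{\ab}{} = \TTV{f^c}{\ab}{}$, and by construction $f^{c,lin}$ jumps only where $f$ does. For the approximation bound: when $\tau_k$ is not a jump point of $f$, continuity at $\tau_k$ together with $\left\Vert f(\tau_k-) - f(\tau_{k-1})\right\Vert_E \le c/2$ (obtained from the recursion by passing to the left limit) yields $\left\Vert f(\tau_k) - f(\tau_{k-1})\right\Vert_E \le c/2$, and the triangle inequality on the interpolation interval then gives $\left\Vert f(t) - f^{c,lin}(t)\right\Vert_E \le c/2 + c/2 = c$. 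The main technical obstacle is the $d_k \ge c/2$ bound above, which demands careful treatment of the one-sided limits of $f$; the $\epsilon$-perturbation of the threshold sidesteps the pathology without affecting the final estimate.
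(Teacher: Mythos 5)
Your overall strategy (greedy stopping times, the elementary inequality $x\le\lambda\max\{x-\tfrac{\lambda-1}{2\lambda}c,0\}$ for $x\in\{0\}\cup[c/2,\infty)$, the triangle-inequality lower bound) is the same as the paper's, but your construction has a genuine gap in how it treats jumps, and this is exactly the point where the paper's proof does extra work. With your definition $\tau_{k+1}=\inf\{t\in(\tau_k,b]:d(f(t),f(\tau_k))>c/2\}$, the recursion \emph{stalls} whenever $f$ has a right jump of size $>c/2$ at $\tau_k$: then $d(f(t),f(\tau_k))>c/2$ for all $t$ in a right neighbourhood of $\tau_k$, so the infimum equals $\tau_k$ and the sequence never advances; your finiteness argument ($f(\tau_k)\to f(\tau^*-)$ contradicting large consecutive increments) silently assumes this does not happen. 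Relatedly, the claim $d_k=d(f(\tau_{k+1}),f(\tau_k))\ge c/2$ is false in general: take $f=0$ on $[0,1/2]$, $f=0.6c$ on $(1/2,3/4)$, $f=0$ on $[3/4,1]$. Then $\tau_1=1/2$ but $f(\tau_1)=f(\tau_0)=0$, so $d_0=0$ while $N$ has increased --- both the counting estimate $Nc/2\le\sum_k d_k$ and the applicability of the elementary inequality break down. Shifting the threshold to $c/2-\epsilon$ does not repair this, because the failure is caused by the value of $f$ \emph{at} $\tau_{k+1}$ returning close to $f(\tau_k)$ after a right jump, not by the strictness of the inequality in the infimum.

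The paper's proof resolves precisely this by a two-case definition: if $d(f(\tau_{n-1}),f(\tau_{n-1}+))\ge c/2$, the next stopping time is defined relative to the right limit $f(\tau_{n-1}+)$ (so the infimum is strictly larger than $\tau_{n-1}$ and the recursion always advances), and $f^c$ is set equal to $f(\tau_{n-1}+)$ on the open interval $(\tau_{n-1},\tau_n)$, so that each ``unit'' of $f^c$ contributes two increments, $d(f^c(\tau_{n-1}),f^c(\tau_{n-1}+))$ and $d(f^c(\tau_{n-1}+),f^c(\tau_n))$, each of which is either $0$ or at least $c/2$ and each of which equals an increment of $f$ between points of a (limiting) partition. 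You need this case split, or an equivalent device, before the rest of your argument --- which is otherwise sound and matches the paper's --- goes through. The lower bound and the $\lambda$-optimisation are fine as written; the piecewise-linear modification is also in the spirit of the paper's, but it inherits the same defect since it is built on the same stalled $\tau_k$'s.
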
 \begin{proof} The estimate from below 
\[
\inf_{g\in B\left(f,c/2\right)}\TTV g{\left[a,b\right]}{}\ge\TTV f{\ab}c
\]
follows immediately from the triangle inequality: if $\sup_{t\in\ab}d\left(f\left(t\right),g\left(t\right)\right)\le c/2$
then for any $a\le s<t\le b,$ 
\begin{align*}
\max\left\{ d\left(g\left(t\right),g\left(s\right)\right)-c,0\right\}  & \le\max\left\{ d\left(g\left(t\right),g\left(s\right)\right)-d\left(g\left(t\right),f\left(t\right)\right)-d\left(g\left(s\right),f\left(s\right)\right),0\right\} \\
 & \le d\left(f\left(t\right),f\left(s\right)\right).
\end{align*}

The estimate from above follows from the following greedy algorithm.
Let {us consider the sequence of times defined
in the following way:} $\tau_{0}=a$ and {for
$n=1,2,\ldots$ 
\[
\tau_{n}=\begin{cases}  
\!\begin{aligned}
 & \inf\left\{ t\in\left(\tau_{n-1},b\right]:d\left(f\left(t\right),f\left(\tau_{n-1}\right)\right)>c/2\right\}   \\
 &  \quad \quad \quad \quad  \text{if } {\tau_{n-1}<b} \text{ and } d\left(f\left(\tau_{n-1}\right),f\left(\tau_{n-1}+\right)\right)<c/2;
\end{aligned}
\\
\!\begin{aligned}
&  \inf\left\{ t\in\left(\tau_{n-1},b\right]:d\left(f\left(t\right),f\left(\tau_{n-1}+\right)\right)>c/2\right\}  \\ & \quad \quad \quad \quad \text{if } {\tau_{n-1}<b} \text{ and } d\left(f\left(\tau_{n-1}\right),f\left(\tau_{n-1}+\right)\right)\ge c/2;
\end{aligned} \\
\!\begin{aligned}
& +\ns \quad \quad \text{otherwise.}
\end{aligned}
\end{cases}
\]
Note that, since $f$ is regulated, $\lim_{n\ra+\ns}\tau_{n}=+\ns.$
(We apply the convention that $\inf\emptyset=+\ns$.) Now we define
a step function $f^{c}\in B\left(f,c/2\right)$ in the following way.
For each $n=1,2,\ldots$ such that $\tau_{n-1}<b$ we put} 
\begin{itemize}
\item if {$d\left(f\left(\tau_{n-1}\right),f\left(\tau_{n-1}+\right)\right)<c/2$
then } 
\[
f^{c}\left(t\right):=f\left(\tau_{n-1}\right)\mbox{ for }t\in\left[\tau_{n-1},\tau_{n}\right)\cap\left[a,b\right];
\]

\item if {$d\left(f\left(\tau_{n-1}\right),f\left(\tau_{n-1}+\right)\right)\ge c/2$
then $f^{c}\left(\tau_{n-1}\right):=f\left(\tau_{n-1}\right)$ and
\[
f^{c}\left(t\right):=f\left(\tau_{n-1}+\right)\mbox{ for }t\in\left(\tau_{n-1},\tau_{n}\right)\cap\left[a,b\right].
\]
} 
\end{itemize}
This way the function $f^{c}$ is defined for all $t\in\ab.$

It is not difficult to see that the just constructed $f^{c}$ satisfies
$\sup_{t\in\ab}d\left(f\left(t\right),f^{c}\left(t\right)\right)\le c/2$
and for each $n=1,2,\ldots$ such that $\tau_{n}\le b$ we have 
\begin{itemize}
\item if $d\left(f\left(\tau_{n-1}\right),f\left(\tau_{n-1}+\right)\right)<c/2$
then 
\begin{equation}
d\left(f^{c}\left(\tau_{n-1}\right),f^{c}\left(\tau_{n-1}+\right)\right)=0\label{eq:i_raz}
\end{equation}
and 
\end{itemize}
\begin{equation}
d\left(f^{c}\left(\tau_{n-1}+\right),f^{c}\left(\tau_{n}\right)\right)=d\left(f\left(\tau_{n-1}\right),f\left(\tau_{n}\right)\right)\ge c/2;\label{eq:i_dwa}
\end{equation}

\begin{itemize}
\item if $d\left(f\left(\tau_{n-1}\right),f\left(\tau_{n-1}+\right)\right)\ge c/2$
then 
\begin{equation}
d\left(f^{c}\left(\tau_{n-1}\right),f^{c}\left(\tau_{n-1}+\right)\right)=d\left(f\left(\tau_{n-1}+\right),f\left(\tau_{n-1}+\right)\right)\ge c/2\label{eq:i_trzy}
\end{equation}
and 
\begin{equation}
d\left(f^{c}\left(\tau_{n-1}+\right),f^{c}\left(\tau_{n}\right)\right)=d\left(f\left(\tau_{n-1}+\right),f\left(\tau_{n}\right)\right)\ge c/2.\label{eq:i_cztery}
\end{equation}

\end{itemize}
Let $N=\max\left\{ n:\tau_{n-1}<b\right\} .$ From an elementary inequality $x\le\lambda\max\left\{ x-\frac{\lambda-1}{2\lambda}c,0\right\} $
valid for any $x\in\left\{ 0\right\} \cup\left[c/2,+\ns\right)$ and
$\lambda>1,$   and (\ref{eq:i_raz})
- (\ref{eq:i_cztery})  we have 
\begin{align*}
\TTV{f^{c}}{\left[a,b\right]}{} & =\sum_{n=1}^{N}\left\{ d\left(f^{c}\left(\tau_{n-1}\right),f^{c}\left(\tau_{n-1}+\right)\right)+d\left(f^{c}\left(\tau_{n-1}+\right),f^{c}\left(\tau_{n}\wedge b\right)\right)\right\} \\
 & \le\lambda\sum_{n=1}^{N}\max\left\{ d\left(f^{c}\left(\tau_{n-1}\right),f^{c}\left(\tau_{n-1}+\right)\right)-\frac{\lambda-1}{2\lambda}c,0\right\} \\
 & \quad+\lambda\sum_{n=1}^{N}\max\left\{ d\left(f^{c}\left(\tau_{n-1}\right),f^{c}\left(\tau_{n}\wedge b\right)\right)-\frac{\lambda-1}{2\lambda}c,0\right\} \\
 & \le\lambda\sum_{n=1}^{N}\max\left\{ d\left(f\left(\tau_{n-1}\right),f\left(\tau_{n-1}+\right)\right)-\frac{\lambda-1}{2\lambda}c,0\right\} \\
 & \quad+\lambda\sum_{n=1}^{N}\max\left\{ d\left(f\left(\tau_{n-1}+\right),f\left(\tau_{n}\wedge b\right)\right)-\frac{\lambda-1}{2\lambda}c,0\right\} \\
 & \le\lambda\TTV f{\ab}{\left(\lambda-1\right)c/\left(2\lambda\right)}.
\end{align*}
Thus, since $f^{c}\in B\left(f,c/2\right)$ and $\lambda$ was an
arbitrary number from the interval $\left(1,+\ns\right),$ we have
\[
\inf_{g\in B\left(f,c/2\right)}\TTV g{\left[a,b\right]}{}\le\TTV{f^{c}}{\left[a,b\right]}{}\le\inf_{\lambda>1}\lambda\cdot\TTV f{\ab}{\left(\lambda-1\right)c/\left(2\lambda\right)}.
\]

The construction of the function $f^{c,lin}$ is similar. For $\tau_{n},$
$n=0,1,\ldots,$ such that $\tau_{n}\le b,$ we define $f^{c,lin}\left(\tau_{n}\right)=f\left(\tau_{n}\right)$
and for $t\in\left(\tau_{n-1},\tau_{n}\right)\cap\ab,$ $n=0,1,\ldots$
such that $\tau_{n-1}<b$ it is defined in the following way. 
\begin{itemize}
\item If {$d\left(f\left(\tau_{n-1}\right),f\left(\tau_{n-1}+\right)\right)<c/2,$
$\tau_{n}\le b$ and $f\left(\tau_{n}-\right)\neq f\left(\tau_{n}\right)$
then for $t\in\left(\tau_{n-1},\tau_{n}\right)$ we put 
\[
f^{c,lin}\left(t\right):=f\left(\tau_{n-1}\right);
\]
} 
\item if $d\left(f\left(\tau_{n-1}\right),f\left(\tau_{n-1}+\right)\right)<c/2$
and $\tau_{n}=+\ns$ then for $t\in\left(\tau_{n-1},b\right]$ we
put 
\[
f^{c,lin}\left(t\right):=f\left(\tau_{n-1}\right);
\]

\item if {$d\left(f\left(\tau_{n-1}\right),f\left(\tau_{n-1}+\right)\right)<c/2,$
$\tau_{n}\le b$ and $f\left(\tau_{n}-\right)=f\left(\tau_{n}\right)$
then for $t\in\left(\tau_{n-1},\tau_{n}\right)$ we put 
\[
f^{c,lin}\left(t\right):=\frac{\tau_{n}-t}{\tau_{n}-\tau_{n-1}}f\left(\tau_{n-1}\right)+\frac{t-\tau_{n-1}}{\tau_{n}-\tau_{n-1}}f\left(\tau_{n}\right);
\]
} 
\item if {$d\left(f\left(\tau_{n-1}\right),f\left(\tau_{n-1}+\right)\right)\ge c/2,$
$\tau_{n}\le b$ and $f\left(\tau_{n}-\right)\neq f\left(\tau_{n}\right)$
then for $t\in\left(\tau_{n-1},\tau_{n}\right)$ we put 
\[
f^{c,lin}\left(t\right):=f\left(\tau_{n-1}+\right);
\]
} 
\item if {$d\left(f\left(\tau_{n-1}\right),f\left(\tau_{n-1}+\right)\right)\ge c/2$
and $\tau_{n}=+\ns$ then for $t\in\left(\tau_{n-1},b\right]$ we
put 
\[
f^{c,lin}\left(t\right):=f\left(\tau_{n-1}+\right);
\]
} 
\item if $d\left(f\left(\tau_{n-1}\right),f\left(\tau_{n-1}+\right)\right)\ge c/2,$
$\tau_{n}\le b$ and $f\left(\tau_{n}-\right)=f\left(\tau_{n}\right)$
then for $t\in\left(\tau_{n-1},\tau_{n}\right)$ 
\[
f^{c,lin}\left(t\right):=\frac{\tau_{n}-t}{\tau_{n}-\tau_{n-1}}f\left(\tau_{n-1}+\right)+\frac{t-\tau_{n-1}}{\tau_{n}-\tau_{n-1}}f\left(\tau_{n}\right).
\]

\end{itemize}
It is straightforward to verify that $\sup_{t\in\ab}\left\Vert f\left(t\right)-f^{c,lin}\left(t\right)\right\Vert _{E}\le c,$
$\TTV{f^{c,lin}}{\left[a,b\right]}{}=\TTV{f^{c}}{\left[a,b\right]}{}$ and the jumps of $f^{c,lin}$ occur only at the points where
the jumps of $f$ occur.
\end{proof}

\section{Integration of irregular signals in Banach spaces}

\selectlanguage{british}%
Directly from the definition it follows that the truncated variation
is a superadditive functional of the interval, i.e. for $c\geq0$
and any $d\in\left(a,b\right)$ 
\begin{equation}
\TTV f{[a,b]}{\delta}\geq\TTV f{\left[a;d\right]}{\delta}+\TTV f{\left[d,b\right]}{\delta}.\label{eq:superadditivity}
\end{equation}
Moreover, if $\left(E,\left\Vert \cdot\right\Vert _{E}\right)$ is
a normed vector space (with the norm $\left\Vert \cdot\right\Vert _{E}$)
we also have the following easy estimate of the truncated variation
of a function $g:\ab\ra E$ perturbed by some other function $h:\ab\ra E:$
\begin{equation}
\TTV{g+h}{[a,b]}{\delta}\leq\TTV g{[a,b]}{\delta}+\TTV h{[a,b]}0,\label{eq:TV_variation}
\end{equation}
which stems directly from the inequality: for $a\leq s<t\leq b,$
\begin{align*}
& \max\left\{ \left\Vert g\left(t\right)+h\left(t\right)-\left\{ g\left(s\right)+h\left(s\right)\right\} \right\Vert _{E}-\delta,0\right\} \\ 
& \quad \quad \quad \quad \quad \quad \le \max\left\{ \left\Vert g\left(t\right)-g\left(s\right)\right\Vert _{E}-\delta,0\right\}  +\left\Vert h\left(t\right)-h\left(s\right)\right\Vert _{E}.
\end{align*}

Let now $\left(E,\left\Vert \cdot\right\Vert _{E}\right),$ $\left(W,\left\Vert \cdot\right\Vert _{W}\right)$
be Banach spaces, $\left(V,\left\Vert \cdot\right\Vert _{V}\right)$
be another Banach space and $\left(L\left(E,V\right),\left\Vert \cdot\right\Vert _{L\left(E,V\right)}\right)$
be the space of continuous linear mappings $F:E\ra V$ with the
norm $\left\Vert F\right\Vert _{L\left(E,V\right)}=\sup_{e\in E:\left\Vert e\right\Vert _{E}=1}\left\Vert F\cdot e\right\Vert _{V}.$
Throughout the rest of this paper we will assume that $f:\ab\ra W$
and $g:\ab\ra E.$ We will often encounter the situation when $W=L\left(E,V\right).$

Relations (\ref{eq:superadditivity}) and (\ref{eq:TV_variation}),
together with Theorem \ref{Fact_multi_dim}, will allow us to establish
the following result.

\begin{theo} \label{main} Let $f:[a,b]\rightarrow L\left(E,V\right)$
and $g:[a,b]\rightarrow E$ be two regulated functions
which have no common points of discontinuity. Let $\eta_{0}\geq\eta_{1}\geq\ldots$
and $\theta_{0}\geq\theta_{1}\geq\ldots$ be two sequences of positive
numbers, such that $\eta_{k}\downarrow0,$ $\theta_{k}\downarrow0$
as $k\rightarrow+\infty.$ Define $\eta_{-1}:=\frac{1}{2}\sup_{a\leq t\leq b}\left\Vert f\left(t\right)-f\left(a\right)\right\Vert _{L\left(E,V\right)}$
and 
\[
S:=4\sum_{k=0}^{+\infty}3^{k}\eta_{k-1}\cdot\TTVemph g{[a,b]}{\theta_{k}/4}+4\sum_{k=0}^{\infty}3^{k}\theta_{k}\cdot\TTVemph f{[a,b]}{\eta_{k}/4}.
\]
If $S<+\infty$ then the Rieman-Stieltjes integral $\int_{a}^{b}f\mathrm{d}g$
exists and one has the following estimate 
\begin{equation}
\left\Vert \int_{a}^{b}f\mathrm{d}g-f\left(a\right)\left[g\left(b\right)-g\left(a\right)\right]\right\Vert _{V}\leq S.\label{eq:estimate_integral}
\end{equation}
\end{theo}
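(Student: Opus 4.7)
The plan is to follow the proof strategy of \cite{LochowskiJIA:2015}, adapted to the Banach-space setting, using Theorem \ref{Fact_multi_dim} as the main new ingredient to manufacture good step-function approximations of $f$ and $g$.

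First, I would apply Theorem \ref{Fact_multi_dim} with $\lambda = 2$ at every scale $k \ge 0$ to obtain step functions $f^{k}:[a,b]\to L(E,V)$ and $g^{k}:[a,b]\to E$ satisfying
\[
\|f-f^{k}\|_{\ab,\ns}\le\eta_{k}/2, \qquad \TTV{f^{k}}{\ab}{}\le 2\,\TTV{f}{\ab}{\eta_{k}/4},
\]
and the analogous bounds for $g^{k}$ with $\theta_{k}$ in place of $\eta_{k}$. Because the step functions have only finitely many jumps, by perturbing the jump times slightly one can further arrange that no $f^{k}$ and $g^{l}$ jump simultaneously, using the assumption that $f$ and $g$ have no common discontinuities. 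These $f^{k}, g^{k}$ form the multi-scale skeletons against which the Riemann--Stieltjes sums will be compared.

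The heart of the proof is to establish the uniform bound
\[
\Bigl\|\sum_{i=0}^{m-1} f(\xi_{i})\bigl(g(s_{i+1})-g(s_{i})\bigr)-f(a)\bigl(g(b)-g(a)\bigr)\Bigr\|_{V}\le S
\]
for every tagged partition $a=s_{0}<\cdots<s_{m}=b$, $\xi_{i}\in[s_{i},s_{i+1}]$. To obtain it, I would rewrite $f(\xi_{i})-f(a)$ and $g(s_{i+1})-g(s_{i})$ as telescoping sums in the $f^{k}$ and $g^{k}$, then perform Abel summation by parts and split the resulting cross terms into two families according to which of $f,g$ is being differentiated. The ``$g$-driven'' family pairs an $f$-oscillation of size $\eta_{k-1}$ with a $g^{k}$-increment; by (\ref{eq:TV_variation}) and the total-variation bound on $g^{k}$, its contribution is at most of order $\eta_{k-1}\cdot\TTV{g}{\ab}{\theta_{k}/4}$. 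Symmetrically, the ``$f$-driven'' family yields $\theta_{k}\cdot\TTV{f}{\ab}{\eta_{k}/4}$. The geometric weight $3^{k}$ arises because, at each scale, the level-$k$ error must absorb the residual mismatch between the approximations at neighbouring scales, and a straightforward induction shows that this tripling of the bound suffices to close the recursion.

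With the uniform estimate in place, existence of $\int_{a}^{b}f\,\dd g$ follows by the standard Cauchy argument: any two tagged partitions can be compared by applying the same bound on each subinterval of their common refinement, so that refining the mesh produces a Cauchy sequence of Riemann--Stieltjes sums, which converges in the Banach space $V$ to a limit satisfying (\ref{eq:estimate_integral}). The step I expect to be the main obstacle is the multi-scale bookkeeping: one must pair each $f^{k+1}-f^{k}$-increment (whose intrinsic scale is $\eta_{k}$) with a $g$-quantity measured at a compatible --- one coarser --- scale, so that the recursion over $k$ terminates with precisely the prefactors $4\cdot 3^{k}\eta_{k-1}$ and $4\cdot 3^{k}\theta_{k}$ appearing in the definition of $S$. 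Once this coupling is correctly arranged, the remaining ingredients --- the no-common-jumps hypothesis (used to dispose of tag-dependent discrete terms) and completeness of $V$ --- enter only routinely.
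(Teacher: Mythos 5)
Your multi-scale single-partition estimate is essentially the paper's Lemma \ref{lema_partition}: approximate $f$ and $g$ at each scale by the step/piecewise-linear functions of Theorem \ref{Fact_multi_dim}, sum by parts to trade the $g$-approximation error for $f$-increments, and control the residuals $g_k=g_{k-1}-g_{k-1}^{\varepsilon_{k-1}}$, $f_k=f_{k-1}-f_{k-1}^{\delta_{k-1}}$ via $\TTVemph{g_{k}}{[a,b]}{\varepsilon_{k}/4}\le 3\,\TTVemph{g_{k-1}}{[a,b]}{\varepsilon_{k}/4}$, which is where the weight $3^{k}$ comes from. That part of your plan is sound. (The side remark about perturbing jump times of the approximants so that no $f^{k}$ and $g^{l}$ jump simultaneously is unnecessary: the summation-by-parts identity is purely algebraic and needs no such condition.)

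The genuine gap is in the passage from the uniform bound to existence of the integral. You claim that comparing two tagged partitions ``by applying the same bound on each subinterval of their common refinement'' yields a Cauchy sequence, but the bound you propose to apply is $S$ (or, after summing the per-subinterval quantities $S_{i}$ by superadditivity of the truncated variation, again something of order $S$), and $S$ does not shrink as the mesh decreases. A uniform bound on the discrepancy of Riemann sums is not a modulus of Cauchy convergence. The paper's mechanism is different and is the crux of the proof: for any $N$, once the mesh is small enough, the no-common-discontinuity hypothesis guarantees that on each subinterval $[a_{i-1},a_{i}]$ either $\sup_{s}\Vert f(s)-f(a_{i-1})\Vert\le\eta_{N-1}$ or $\sup_{s}\Vert g(a_{i})-g(s)\Vert\le\theta_{N-1}$; one then applies the single-partition lemma with the \emph{shifted} sequences $\delta_{j}=\eta_{N+j}$, $\varepsilon_{j}=\theta_{N+j}$, so that each term $3^{j}\eta_{N+j-1}\TTVemph{g}{[a_{i-1},a_i]}{\theta_{N+j}/4}$ is $3^{-N}$ times the corresponding term of $S_{i}$, giving a per-subinterval bound $\le 2\cdot 3^{-N}S_{i}$ (and $\le 4\cdot 3^{-N}S_{i}$ on the subintervals where one must first sum by parts to put the small-oscillation function in the integrand's role). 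Summing and using superadditivity gives a total discrepancy $\le 12\cdot 3^{-N}S\ra 0$. Without this scale shift --- which is precisely where the hypothesis on common discontinuities and the monotone decay $\eta_{k},\theta_{k}\downarrow 0$ are used --- your Cauchy argument does not close.
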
 \begin{rem} For $\xi,s,t\in\ab$ by $f\left(\xi\right)\left[g\left(t\right)-g\left(s\right)\right]$
we mean the value of the linear mapping $f\left(\xi\right)$ evaluated
at the vector $g\left(t\right)-g\left(s\right)$ (i.e. the element
of the space $V$) and the Riemann-Stieltjes integral $\int_{a}^{b}f\mathrm{d}g$
is understood as the limit (if it exists) of the sums 
\[
\sum_{i=1}^{n}f\left(\xi_{i}^{\left(n\right)}\right)\left[g\left(t_{i}^{\left(n\right)}\right)-g\left(t_{i-1}^{\left(n\right)}\right)\right],
\]
where for $n=1,2,\ldots,$ $i=1,2,\ldots,n,$ $a=t_{0}^{\left(n\right)}<t_{1}^{\left(n\right)}<\ldots<t_{n}^{\left(n\right)}=b,$
$\xi_{i}^{\left(n\right)}\in\left[t_{i-1}^{\left(n\right)},t_{i}^{\left(n\right)}\right]$
and $\lim_{n\ra+\ns}\max_{1\le i\le n}\left(t_{i}^{\left(n\right)}-t_{i-1}^{\left(n\right)}\right)=0.$
\end{rem} The proof of Theorem \ref{main} will be based on the following
lemmas.
\begin{lema}[summation by parts in a Banach space]\label{sum_parts} Let $f:\ab\rightarrow L\left(E,V\right),$
$g:\ab\rightarrow E$ and $c=t_{0}<t_{1}<\ldots<t_{n}=d$ be any partition
of the interval $\left[c,d\right]\subset[a,b].$ Let $\xi_{0}=c$
and $\xi_{1},\ldots,\xi_{n}$ be such that $t_{i-1}\leq\xi_{i}\leq t_{i}$
for $i=1,2,\ldots,n,$ then 
\begin{equation}
\sum_{i=1}^{n}\left[f\left(\xi_{i}\right)-f\left(c\right)\right]\left[g\left(t_{i}\right)-g\left(t_{i-1}\right)\right]=\sum_{i=1}^{n}\left[f\left(\xi_{i}\right)-f\left(\xi_{i-1}\right)\right]\left[g\left(d\right)-g\left(t_{i-1}\right)\right].\label{eq:sum_by_parts}
\end{equation}
\end{lema}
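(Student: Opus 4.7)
The identity is a purely algebraic consequence of Abel's summation-by-parts, relying only on the bilinearity of the pairing $L\left(E,V\right)\times E\ni(F,x)\mapsto Fx\in V$; in particular, no analytic hypotheses on $f$ or $g$, nor completeness of any space, are invoked. My plan is therefore to rewrite the left-hand side, interchange the order of summation, and recognise the right-hand side.

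Concretely, since $\xi_{0}=c$, I would expand $f(\xi_{i})-f(c)$ as the telescoping sum
$$f(\xi_{i})-f(c)=\sum_{j=1}^{i}\bigl[f(\xi_{j})-f(\xi_{j-1})\bigr],\qquad i=1,\ldots,n.$$
Substituting this into the left-hand side of (\ref{eq:sum_by_parts}) and using linearity of the map $F\mapsto F\bigl[g(t_{i})-g(t_{i-1})\bigr]$ produces the double sum
$$\sum_{i=1}^{n}\sum_{j=1}^{i}\bigl[f(\xi_{j})-f(\xi_{j-1})\bigr]\bigl[g(t_{i})-g(t_{i-1})\bigr].$$
Interchanging the order of summation (this is the main and only real step), the inner sum for fixed $j$ becomes $\sum_{i=j}^{n}\bigl[g(t_{i})-g(t_{i-1})\bigr]$, which telescopes to $g(t_{n})-g(t_{j-1})=g(d)-g(t_{j-1})$. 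Relabelling $j\mapsto i$ yields exactly the right-hand side of (\ref{eq:sum_by_parts}).

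The main (and only) obstacle is a bookkeeping one: the assumption $\xi_{0}=c$ is essential so that the telescope starts cleanly at $j=1$ with no boundary term; without it one would pick up an additional contribution $\bigl[f(\xi_{0})-f(c)\bigr]\bigl[g(d)-g(c)\bigr]$ on the right. No other difficulty arises, so the proof reduces to a one-line manipulation carried out in the Banach space $V$ coordinate-free, via the bilinear pairing alone.
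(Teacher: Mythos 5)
Your proof is correct and is essentially identical to the paper's: both expand $f(\xi_{i})-f(c)$ as the telescoping sum $\sum_{j=1}^{i}\left[f(\xi_{j})-f(\xi_{j-1})\right]$ (using $\xi_{0}=c$), interchange the order of summation, and telescope the inner sum over $g$ to obtain $g(d)-g(t_{j-1})$. No differences worth noting.
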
 
\begin{proof} {For $i=1,2\ldots,n$ let us denote $f_{i}=f\left(\xi_{i}\right)-f\left(\xi_{i-1}\right),$
$g_{i}=g\left(t_{i}\right)-g\left(t_{i-1}\right).$ We have 
\begin{align*}
&\sum_{i=1}^{n}\left[f\left(\xi_{i}\right)-f\left(c\right)\right]\left[g\left(t_{i}\right)-g\left(t_{i-1}\right)\right]  =\sum_{i=1}^{n}\left(\sum_{j=1}^{i}f_{j}\right)g_{i}\\
&=\sum_{j=1}^{n}f_{j}\left(\sum_{i=j}^{n}g_{i}\right)=\sum_{i=1}^{n}\left[f\left(\xi_{i}\right)-f\left(\xi_{i-1}\right)\right]\left[g\left(d\right)-g\left(t_{i-1}\right)\right].
\end{align*}
} 
\end{proof}
\begin{lema} \label{lema_partition} Let $f:[a,b]\rightarrow L\left(E,V\right)$
and $g:[a,b]\rightarrow E$ be two regulated functions.
Let $c=t_{0}<t_{1}<\ldots<t_{n}=d$ be any partition of the interval
$\left[c,d\right]\subset[a,b]$ and let $\xi_{0}=c$ and
$\xi_{1},\ldots,\xi_{n}$ be such that $t_{i-1}\leq\xi_{i}\leq t_{i}$
for $i=1,2,\ldots,n.$ Then for $\delta_{-1}:=\frac{1}{2}\sup_{c\leq t\leq d}\left\Vert f\left(t\right)-f\left(c\right)\right\Vert _{L\left(E,V\right)}$
and any $\delta_{0}\geq\delta_{1}\geq\ldots\geq\delta_{r}>0$ and
$\varepsilon_{0}\geq\varepsilon_{1}\geq\ldots\geq\varepsilon_{r}>0$
the following estimate holds 
\begin{eqnarray*}
 &  & \left\Vert\sum_{i=1}^{n}f\left(\xi_{i}\right)\left[g\left(t_{i}\right)-g\left(t_{i-1}\right)
\right]-f\left(c\right)\left[g\left(d\right)-g\left(c\right)\right]\right\Vert_V\\
 &  & \leq 4\sum_{k=0}^{r}3^{k}\delta_{k-1}\cdot\TTVemph g{\left[c,d\right]}{\varepsilon_{k}/4}+4\sum_{k=0}^{r}3^{k}\varepsilon_{k}\cdot\TTVemph f{\left[c,d\right]}{\delta_{k}/4}+n\delta_{r}\varepsilon_{r}.
\end{eqnarray*}
\end{lema}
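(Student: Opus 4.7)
\bigskip

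\noindent The plan is to proceed by induction on $r$, at each step peeling off controlled pieces of $f$ and $g$ using the step-function approximations from Theorem \ref{Fact_multi_dim}. Fix $\lambda=2$, and for parameters $\delta_0,\varepsilon_0>0$ let $f^{\delta_0}:[c,d]\ra L(E,V)$ and $g^{\varepsilon_0}:[c,d]\ra E$ be the step-function approximations supplied by Theorem \ref{Fact_multi_dim} applied on $[c,d]$, so that
\[
\sup_{t\in[c,d]}\left\Vert f(t)-f^{\delta_0}(t)\right\Vert_{L(E,V)}\le \delta_0/2,\quad \TTV{f^{\delta_0}}{[c,d]}{}\le 2\TTV{f}{[c,d]}{\delta_0/4},
\]
and analogously for $g^{\varepsilon_0}$; the construction in the proof of Theorem \ref{Fact_multi_dim} yields $f^{\delta_0}(c)=f(c)$ and $g^{\varepsilon_0}(c)=g(c)$. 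Denote by $H_n$ the vector whose norm is to be bounded. Telescoping $g(d)-g(c)=\sum_{i=1}^{n}[g(t_i)-g(t_{i-1})]$ rewrites $H_n=\sum_{i=1}^{n}[f(\xi_i)-f(c)][g(t_i)-g(t_{i-1})]$. Setting $F_0:=f-f^{\delta_0}$ and $G_0:=g-g^{\varepsilon_0}$, I will first decompose $g=g^{\varepsilon_0}+G_0$ in this sum, then apply Lemma \ref{sum_parts} to the $G_0$-part, then decompose $f(\xi_i)-f(\xi_{i-1})=[f^{\delta_0}(\xi_i)-f^{\delta_0}(\xi_{i-1})]+[F_0(\xi_i)-F_0(\xi_{i-1})]$, producing three pieces $H_n=S_1+S_{2a}+S_{2b}$.

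Two of these are controlled directly. For $S_1=\sum[f(\xi_i)-f(c)][g^{\varepsilon_0}(t_i)-g^{\varepsilon_0}(t_{i-1})]$ the estimate $\Vert f(\xi_i)-f(c)\Vert_{L(E,V)}\le 2\delta_{-1}$ together with the total-variation bound on $g^{\varepsilon_0}$ gives $\Vert S_1\Vert_V\le 4\delta_{-1}\TTV{g}{[c,d]}{\varepsilon_0/4}$. For $S_{2a}=\sum[f^{\delta_0}(\xi_i)-f^{\delta_0}(\xi_{i-1})][G_0(d)-G_0(t_{i-1})]$ the estimate $\Vert G_0(d)-G_0(t_{i-1})\Vert_E\le\varepsilon_0$ combined with $\TTV{f^{\delta_0}}{[c,d]}{}\le 2\TTV{f}{[c,d]}{\delta_0/4}$ gives $\Vert S_{2a}\Vert_V\le 2\varepsilon_0\TTV{f}{[c,d]}{\delta_0/4}$.

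The remainder $S_{2b}=\sum[F_0(\xi_i)-F_0(\xi_{i-1})][G_0(d)-G_0(t_{i-1})]$ is the recursive heart of the argument: because $F_0(c)=0$, Lemma \ref{sum_parts} used in reverse identifies $S_{2b}$ with $\sum F_0(\xi_i)[G_0(t_i)-G_0(t_{i-1})]$, i.e., with the analogue of $H_n$ for the pair $(F_0,G_0)$ in place of $(f,g)$. In the base case $r=0$ I will bound $\Vert S_{2b}\Vert_V$ directly by $n\delta_0\varepsilon_0$, using $\Vert F_0(\xi_i)-F_0(\xi_{i-1})\Vert_{L(E,V)}\le\delta_0$ and $\Vert G_0(d)-G_0(t_{i-1})\Vert_E\le\varepsilon_0$. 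For $r\ge 1$, I apply the induction hypothesis to $S_{2b}$ with the shifted sequences $(\delta_{k+1})_{k=0}^{r-1}$, $(\varepsilon_{k+1})_{k=0}^{r-1}$; the new initial oscillation for $F_0$ satisfies $\frac{1}{2}\sup\Vert F_0\Vert_{L(E,V)}\le\delta_0/4\le\delta_0$, and inequality (\ref{eq:TV_variation}) together with the monotonicity of the truncated variation in its threshold yields $\TTV{G_0}{[c,d]}{\varepsilon_k/4}\le 3\TTV{g}{[c,d]}{\varepsilon_k/4}$ and $\TTV{F_0}{[c,d]}{\delta_k/4}\le 3\TTV{f}{[c,d]}{\delta_k/4}$ for each $k\ge 0$.

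Reassembling, the factor $3$ from the perturbation bounds combined with the shift in summation index turns the inductive prefactor $3^k$ into $3^{k+1}$, which after re-indexing $j=k+1$ produces the desired $3^j$ in the target estimate; the slack $2\varepsilon_0\le 4\varepsilon_0$ absorbs the missing factor in the $j=0$ term of the second series, and the residual $n\delta_r\varepsilon_r$ is inherited from the bottommost layer of the recursion. The principal obstacle I anticipate is not analytical but clerical: keeping the index shifts straight so that the crude bound $\delta'_{-1}\le\delta_0$ fills the $\delta_{j-1}$ slot precisely at $j=0$, and verifying that the factor-of-$3$ perturbation estimate compounds to give exactly $4\cdot 3^k$ (not $4\cdot 3^{k+1}$ or similar) in the final bound. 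Everything else reduces to the triangle inequality plus repeated use of Theorem \ref{Fact_multi_dim}, Lemma \ref{sum_parts}, and the sub-additivity/perturbation identities (\ref{eq:superadditivity})--(\ref{eq:TV_variation}).
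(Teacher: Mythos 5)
Your proposal is correct and follows essentially the same route as the paper: approximate $g$ by $g^{\varepsilon_0}$ and $f$ by $f^{\delta_0}$ via Theorem \ref{Fact_multi_dim}, bound the two main pieces using the oscillation of $f$ and the sup-norm of $g-g^{\varepsilon_0}$ after summation by parts, and iterate on the residual pair $(f-f^{\delta_0},\,g-g^{\varepsilon_0})$, with inequality (\ref{eq:TV_variation}) supplying the factor $3$ that compounds to $3^{k}$. The paper unrolls this recursion explicitly (defining $f_k,g_k$ and proving $\TTV{g_{k}}{[c,d]}{\varepsilon_{k}/4}\le 3^{k}\TTV g{[c,d]}{\varepsilon_{k}/4}$) rather than packaging it as a formal induction on $r$, but the two arguments are the same, and your bookkeeping of the index shift and the bound $\delta'_{-1}\le\delta_0$ is accurate.
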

\begin{proof} The proof goes exactly along the same lines as the
proof of \cite[Lemma 1]{LochowskiJIA:2015} with the obvious
changes. The idea is to utilize Theorem \ref{Fact_multi_dim} and approximate
the functions $g$ an $f$ by two piecewise linear functions $g^{\varepsilon_{0}}:\ab\ra E$
and $f^{\delta_{0}}:\ab\ra L\left(E,V\right)$ satisfying the following
conditions: 
\begin{equation}
\sup_{t\in\left[c,d\right]}\left\Vert g\left(t\right)-g^{\varepsilon_{0}}\left(t\right)\right\Vert _{E}\leq\varepsilon_{0}\mbox{ and }\TTV{g^{\varepsilon_{0}}}{\left[c,d\right]}{}\le2\TTV g{\left[c,d\right]}{\varepsilon_{0}/4},\label{eq:jeden}
\end{equation}
and 
\begin{equation}
\sup_{t\in\left[c,d\right]}\left\Vert f\left(t\right)-f^{\delta_{0}}\left(t\right)\right\Vert _{L\left(E,V\right)}\leq\delta_{0}\mbox{ and }\TTV{f^{\delta_{0}}}{\left[c,d\right]}{}\le2\TTV f{\left[c,d\right]}{\delta_0/4}.\label{eq:dwa}
\end{equation}
We have 
\begin{align*}
 & \left\Vert \sum_{i=1}^{n}f\left(\xi_{i}\right)\left[g\left(t_{i}\right)-g\left(t_{i-1}\right)\right]-f\left(c\right)\left[g\left(d\right)-g\left(c\right)\right]\right\Vert _{V}\\
 & =\left\Vert \sum_{i=1}^{n}\left[f\left(\xi_{i}\right)-f\left(c\right)\right]\left[g\left(t_{i}\right)-g\left(t_{i-1}\right)\right]\right\Vert _{V}\\
 & \le\left\Vert \sum_{i=1}^{n}\left[f\left(\xi_{i}\right)-f\left(c\right)\right]\left[g^{\varepsilon_{0}}\left(t_{i}\right)-g^{\varepsilon_{0}}\left(t_{i-1}\right)\right]\right\Vert _{V}\\
 & \quad +\left\Vert \sum_{i=1}^{n}\left[f\left(\xi_{i}\right)-f\left(c\right)\right]\left[\left(g-g^{\varepsilon_{0}}\right)\left(t_{i}\right)-\left(g-g^{\varepsilon_{0}}\right)\left(t_{i-1}\right)\right]\right\Vert _{V}.
\end{align*}
By (\ref{eq:jeden}) we estimate the first summand 
\begin{align*}
\left\Vert \sum_{i=1}^{n}\left[f\left(\xi_{i}\right)-f\left(c\right)\right]\left[g^{\varepsilon_{0}}\left(t_{i}\right)-g^{\varepsilon_{0}}\left(t_{i-1}\right)\right]\right\Vert _{V} & \le2\delta_{-1}\cdot\TTV{g^{\varepsilon_{0}}}{\left[c,d\right]}{}\\
 & \le4\delta_{-1}\cdot\TTV g{\left[c,d\right]}{\varepsilon_{0}/4}.
\end{align*}
By the summation by parts and then by (\ref{eq:dwa}) we estimate
the second summand 
\begin{eqnarray}
 &  & \left\Vert \sum_{i=1}^{n}\left[f\left(\xi_{i}\right)-f\left(c\right)\right]\left[\left(g-g^{\varepsilon_{0}}\right)\left(t_{i}\right)-\left(g-g^{\varepsilon_{0}}\right)\left(t_{i-1}\right)\right]\right\Vert _{V}\nonumber \\
 &  & =\left\Vert \sum_{i=1}^{n}\left[f\left(\xi_{i}\right)-f\left(\xi_{i-1}\right)\right]\left[\left(g-g^{\varepsilon_{0}}\right)\left(d\right)-\left(g-g^{\varepsilon_{0}}\right)\left(t_{i-1}\right)\right]\right\Vert _{V} \nonumber \\
 &  & \le\left\Vert \sum_{i=1}^{n}\left[f^{\delta_{0}}\left(\xi_{i}\right)-f^{\delta_{0}}\left(\xi_{i-1}\right)\right]\left[\left(g-g^{\varepsilon_{0}}\right)\left(d\right)-\left(g-g^{\varepsilon_{0}}\right)\left(t_{i-1}\right)\right]\right\Vert _{V}\nonumber \\
 &  & \quad +\left\Vert \sum_{i=1}^{n}\left[\left(f-f^{\delta_{0}}\right)\left(\xi_{i}\right)-\left(f-f^{\delta_{0}}\right)\left(\xi_{i-1}\right)\right]\left[\left(g-g^{\varepsilon_{0}}\right)\left(d\right)-\left(g-g^{\varepsilon_{0}}\right)\left(t_{i-1}\right)\right]\right\Vert _{V}\nonumber \\
 &  & \le2\varepsilon_{0}\cdot\TTV{f^{\delta_{0}}}{\left[c,d\right]}{}+4n\delta_{0}\varepsilon_{0}\leq4\varepsilon_{0}\cdot\TTV f{\left[c,d\right]}{\delta_{0}/4}+4n\delta_{0}\varepsilon_{0}.\label{eq:jeden1}
\end{eqnarray}
Repeating these arguments, by induction we get 
\begin{eqnarray}
 &  & \left\Vert\sum_{i=1}^{n}f\left(\xi_{i}\right)\left[g\left(t_{i}\right)-g\left(t_{i-1}\right)\right]-f\left(c\right)\left[g\left(d\right)-g\left(c\right)\right]\right\Vert_V\nonumber \\
 &  & \leq4\sum_{k=0}^{r}\delta_{k-1}\cdot\TTV{g_{k}}{\left[c,d\right]}{\varepsilon_{k}/4}
+4\sum_{k=0}^{r}\varepsilon_{k}\cdot\TTV{f_{k}}{\left[c,d\right]}{\delta_{k}/4}+4n\delta_{r}\varepsilon_{r},\label{eq:induction}
\end{eqnarray}
where $g_{0}\equiv g,$ $f_{0}\equiv f$ and for $k=1,2,\ldots,r,$
$g_{k}:=g_{k-1}-g_{k-1}^{\varepsilon_{k-1}},$ $f_{k}:=f_{k-1}-f_{k-1}^{\delta_{k-1}}$
are defined similarly as $g_{1}$ and $f_{1}.$ Since $\varepsilon_{k}\leq\varepsilon_{k-1}$
for $k=1,2,\ldots,r,$ by (\ref{eq:TV_variation}) and the fact that
the function $\delta\mapsto\TTV h{\left[c,d\right]}{\delta}$ is non-increasing,
we estimate 
\begin{eqnarray*}
\TTV{g_{k}}{\left[c,d\right]}{\varepsilon_{k}/4} & = & \TTV{g_{k-1}-g_{k-1}^{\varepsilon_{k-1}}}{\left[c,d\right]}{\varepsilon_{k}/4}\\
 & \leq & \TTV{g_{k-1}}{\left[c,d\right]}{\varepsilon_{k}/4}+\TTV{g_{k-1}^{\varepsilon_{k-1}}}{\left[c,d\right]}0\\
 & \le & \TTV{g_{k-1}}{\left[c,d\right]}{\varepsilon_{k}/4}+2 \TTV{g_{k-1}}{\left[c,d\right]}{\varepsilon_{k-1}/4}\\
 & \leq & 3\TTV{g_{k-1}}{\left[c,d\right]}{\varepsilon_{k}/4}.
\end{eqnarray*}
Hence, by recursion, for $k=1,2,\ldots,r,$ 
\[
\TTV{g_{k}}{\left[c,d\right]}{\varepsilon_{k}/4}\leq3^{k}\TTV g{\left[c,d\right]}{\varepsilon_{k}/4}.
\]
Similarly, for $k=1,2,\ldots,r,$ we have 
\[
\TTV{f_{k}}{\left[c,d\right]}{\delta_{k}/4}\leq3^{k}\TTV f{\left[c,d\right]}{\delta_{k}/4}.
\]
By (\ref{eq:induction}) and last two estimates we get the desired
estimate.
\end{proof} 

Now we proceed to the proof of Theorem \ref{main}.

\begin{proof}
Again, the proof goes exactly along the same lines as the proof of
\cite[Theorem 1]{LochowskiJIA:2015} with the obvious changes.
Therefore we present here the main steps and for details we refer
to the proof of \cite[Theorem 1]{LochowskiJIA:2015}. It is
enough to prove that for any two partitions $\pi=\left\{ a=a_{0}<a_{1}<\ldots<a_{l}=b\right\} ,$
$\rho=\left\{ a=b_{0}<b_{1}<\ldots<b_{m}=b\right\} $ and $\nu_{i}\in\left[a_{i-1},a_{i}\right],$
$\xi_{j}\in\left[b_{j-1},b_{j}\right],$ $i=1,2,\ldots,l,$ $j=1,2,\ldots,m,$
the difference 
\[
\left\Vert \sum_{i=1}^{l}f\left(\nu_{i}\right)\left[g\left(a_{i}\right)-g\left(a_{i-1}\right)\right]-\sum_{j=1}^{m}f\left(\xi_{j}\right)\left[g\left(b_{j}\right)-g\left(b_{j-1}\right)\right]\right\Vert _{V}
\]
is as small as we please, provided that the meshes of the partitions
$\pi$ and $\rho$, defined as $\mbox{mesh}\left(\pi\right):=\max_{i=1,2,\ldots,l}\left(a_{i}-a_{i-1}\right)$
and $\mbox{mesh}\left(\rho\right):=\max_{j=1,2,\ldots,m}\left(b_{j}-b_{j-1}\right)$
respectively, are sufficiently small. Define 
\[
\sigma=\pi\cup\rho=\left\{ a=s_{0}<s_{1}<\ldots<s_{n}=b\right\} 
\]
and for $i=1,2,\ldots,l$ we estimate 
\begin{eqnarray*}
 &  & \left\Vert f\left(\nu_{i}\right)\left[g\left(a_{i}\right)-g\left(a_{i-1}\right)\right]-\sum_{k:s_{k-1},s_{k}\in\left[a_{i-1},a_{i}\right]}f\left(s_{k-1}\right)\left[g\left(s_{k}\right)-g\left(s_{k-1}\right)\right]\right\Vert _{V}\\
 &  & \leq\left\Vert f\left(\nu_{i}\right)\left[g\left(a_{i}\right)-g\left(a_{i-1}\right)\right]-f\left(a_{i-1}\right)\left[g\left(a_{i}\right)-g\left(a_{i-1}\right)\right]\right\Vert _{V}\\
 &  & \quad +\left\Vert \sum_{k:s_{k-1},s_{k}\in\left[a_{i-1},a_{i}\right]}f\left(s_{k-1}\right)\left[g\left(s_{k}\right)-g\left(s_{k-1}\right)\right]-f\left(a_{i-1}\right)\left[g\left(a_{i}\right)-g\left(a_{i-1}\right)\right]\right\Vert _{V}.
\end{eqnarray*}
Choose $N=1,2,\ldots.$ By the assumption that $f$ and $g$ have
no common points of discontinuity, if $\mbox{mesh}\left(\pi\right)$
is sufficiently small, for $i=1,2,\ldots,l$ we have 
\begin{equation}
\sup_{a_{i-1}\leq s\leq a_{i}}\left\Vert f\left(s\right)-f\left(a_{i-1}\right)\right\Vert _{L\left(E,V\right)}\leq\eta_{N-1}\label{eq:ind_f}
\end{equation}
or 
\begin{equation}
\sup_{a_{i-1}\leq s\leq a_{i}}\left\Vert g\left(a_{i}\right)-g\left(s\right)\right\Vert _{E}\leq\theta_{N-1}.\label{eq:ind_J}
\end{equation}
{ }Now, for $i=1,2,\ldots,l$ we define
\[
S_{i}:=4\sum_{j=0}^{+\infty}3^{j}\eta_{j-1}\cdot\TTV g{\left[a_{i-1},a_{i}\right]}{\theta_{j}/4}+4\sum_{j=0}^{+\infty}3^{j}\theta_{j}\cdot\TTV f{\left[a_{i-1},a_{i}\right]}{\eta_{j}/4}
\]
and for $i$ such that (\ref{eq:ind_f}) holds, we set $\delta_{-1}:=\frac{1}{2}\eta_{N-1},$
$\delta_{j}:=\eta_{N+j},$ $\varepsilon_{j}:=\theta_{N+j},$ $j=0,1,2,\ldots.$
By Lemma \ref{lema_partition} we estimate 
\begin{eqnarray*}
 &  & \left\Vert \sum_{k:s_{k-1},s_{k}\in\left[a_{i-1},a_{i}\right]}f\left(s_{k-1}\right)\left[g\left(s_{k}\right)-g\left(s_{k-1}\right)\right]-f\left(a_{i-1}\right)\left[g\left(a_{i}\right)-g\left(a_{i-1}\right)\right]\right\Vert _{V}\\
 &  & \leq 4 \sum_{j=0}^{+\infty}3^{j}\delta_{j-1}\cdot\TTV g{\left[a_{i-1},a_{i}\right]}{\varepsilon_{j}/4}+4\sum_{j=0}^{+\infty}3^{j}\varepsilon_{j}\cdot\TTV f{\left[a_{i-1},a_{i}\right]}{\delta_{j}/4}\\
 &  & \leq 4\sum_{j=0}^{+\infty}3^{j}\eta_{N+j-1}\cdot\TTV g{\left[a_{i-1},a_{i}\right]}{\theta_{N+j}/4}+4\sum_{j=0}^{+\infty}3^{j}\theta_{N+j}\cdot\TTV f{\left[a_{i-1},a_{i}\right]}{\eta_{N+j}/4}\\
 &  & \leq3^{-N}S_{i}.
\end{eqnarray*}
Similarly, 
\begin{eqnarray*}
\left\Vert f\left(\nu_{i}\right)\left[g\left(a_{i}\right)-g\left(a_{i-1}\right)\right]-f\left(a_{i-1}\right)\left[g\left(a_{i}\right)-g\left(a_{i-1}\right)\right]\right\Vert _{V} & \leq & 3^{-N}S_{i}.
\end{eqnarray*}
Hence 
\begin{equation}
\left\Vert f\left(\nu_{i}\right)\left[g\left(a_{i}\right)-g\left(a_{i-1}\right)\right]-\sum_{k:s_{k-1},s_{k}\in\left[a_{i-1},a_{i}\right]}f\left(s_{k-1}\right)\left[g\left(s_{k}\right)-g\left(s_{k-1}\right)\right]\right\Vert _{V} \leq 2\cdot 3^{-N}S_{i}.\label{eq:ineq_i}
\end{equation}
The truncated variation is a superadditive function of the interval,
from which we have 
\[
\sum_{i=1}^{l}\TTV g{\left[a_{i-1},a_{i}\right]}{\theta_{j}/4}\leq\TTV g{[a,b]}{\theta_{j}/4},
\]
\[
\sum_{i=1}^{l}\TTV f{\left[a_{i-1},a_{i}\right]}{\eta_{j}/4}\leq\TTV f{[a,b]}{\eta_{j}/4}.
\]

Let $I$ be the set of all indices, for which (\ref{eq:ind_f}) holds.
By (\ref{eq:ineq_i}) and last two inequalities, summing over $i\in I$
we get the estimate 
\begin{eqnarray}
 &  & \left\Vert \sum_{i\in I}\left\{ f\left(\nu_{i}\right)\left[g\left(a_{i}\right)-g\left(a_{i-1}\right)\right]-\sum_{k:s_{k-1},s_{k}\in\left[a_{i-1},a_{i}\right]}f\left(s_{k-1}\right)\left[g\left(s_{k}\right)-g\left(s_{k-1}\right)\right]\right\} \right\Vert _{V}\nonumber \\
 &  & \leq2 \cdot 3^{-N}\sum_{i\in I}S_{i}\leq2\cdot3^{-N}\sum_{i=1}^{l}S_{i}\leq2\cdot3^{-N}S.\label{eq:estim_I-1}
\end{eqnarray}
 Now, let $J$ be the set of all indices, for which (\ref{eq:ind_J})
holds. By the summation by parts (Lemma \ref{sum_parts}), by Lemma \ref{lema_partition} and
the superadditivity of the truncated variation we get 
\begin{eqnarray}
 &  & \left\Vert \sum_{i\in J}\left\{ f\left(\nu_{i}\right)\left[g\left(a_{i}\right)-g\left(a_{i-1}\right)\right]-\sum_{k:s_{k-1},s_{k}\in\left[a_{i-1},a_{i}\right]}f\left(s_{k-1}\right)\left[g\left(s_{k}\right)-g\left(s_{k-1}\right)\right]\right\} \right\Vert _{V}\nonumber \\
 &  & \leq4\cdot3^{-N}\sum_{i\in J}S_{i}\leq4\cdot3^{-N}\sum_{i=1}^{l}S_{i}\leq4\cdot3^{-N}S.\label{eq:estim_J-1}
\end{eqnarray}
Finally, from (\ref{eq:estim_I-1}) and (\ref{eq:estim_J-1}) we get
\[
\left\Vert \sum_{i=1}^l f\left(\nu_{i}\right)\left[g\left(a_{i}\right)-g\left(a_{i-1}\right)\right]-\sum_{k=1}^{n}f\left(s_{k-1}\right)\left[g\left(s_{k}\right)-g\left(s_{k-1}\right)\right]\right\Vert _{V}\leq6\cdot3^{-N}S.
\]
Similar estimate holds for 
\[
\left\Vert \sum_{j=1}^{m}f\left(\xi_{j}\right)\left[g\left(b_{j}\right)-g\left(b_{j-1}\right)\right]-\sum_{k=1}^{n}f\left(s_{k-1}\right)\left[g\left(s_{k}\right)-g\left(s_{k-1}\right)\right]\right\Vert _{V},
\]
provided that $\mbox{mesh}\left(\rho\right)$ is sufficiently small.
Hence 
\[
\left\Vert \sum_{i=1}^{l}f\left(\nu_{i}\right)\left[g\left(a_{i}\right)-g\left(a_{i-1}\right)\right]-\sum_{j=1}^{m}f\left(\xi_{j}\right)\left[g\left(b_{j}\right)-g\left(b_{j-1}\right)\right]\right\Vert _{V}\leq12\cdot3^{-N}S,
\]
provided that $\mbox{mesh}\left(\pi\right)+\mbox{mesh}\left(\rho\right)$
is sufficiently small. 

Since $N$ may be arbitrary large, we get the
convergence of the approximating sums to an universal limit, which
is the Riemann-Stieltjes integral. 

The estimate (\ref{eq:estimate_integral})
follows directly from the proved convergence of approximating sums
to the Riemann-Stieltjes integral and Lemma \ref{lema_partition}. 
\end{proof}
\subsection{An improved version of the Lo\'{e}ve-Young inequality}
Now we will obtain an improved version of the Lo\'{e}ve-Young inequality for
integrals driven by irregular signals attaining their values in Banach
spaces. Our main tool will be Theorem \ref{main} and the following
simple relation between the rate of growth of the truncated variation
and finiteness of $p-$variation. If $V^{p}\left(f,\ab\right)<+\infty$
for some $p\ge 1,$ then for every $\delta>0,$ 
\begin{equation}
\TTV f{\ab}{\delta}\leq V^{p}\left(f,\ab\right)\delta^{1-p}.\label{eq:p_variation}
\end{equation}
This result folows immediately from the elementary estimate: for any
$x \ge 0,$ 
\[
\delta^{p-1}\max\left\{ x-\delta,0\right\} \le\begin{cases}
0 & \mbox{if }x\le\delta\\
x^{p} & \mbox{if }x>\delta
\end{cases}\le x^{p}.
\]
Notice also that if $V^{p}\left(f,\ab\right)<+\infty$ for some $p>0$
then $f$ is regulated. For $p\ge1$ and a Banach space $W$ by {${\cal V}^{p}\rbr{\ab,W}$ 
we will denote the Banach space of all functions $f:\ab\ra W$ such
that $V^{p}\left(f,\ab\right)<+\infty.$ By $\left\Vert f\right\Vert _{p-\text{var},[a,b]}$
we will denote the semi-norm 
\[
\left\Vert f\right\Vert _{p-\text{var},[a,b]}:=\left(V^{p}\left(f,\ab\right)\right)^{1/p}.
\]
\begin{coro} \label{corol_Young}
Let $f:\ab\rightarrow L\left(E,V\right),$ $g:\ab\ra E$ be two functions
with no common points of discontinuity. If $f\in{\cal V}^{p}\rbr{[a,b],L\left(E,V\right)}$
and $g\in{\cal V}^{q}\rbr{[a,b],E},$ where $p > 1,$ $q > 1,$ $p^{-1}+q^{-1}>1,$
then the Riemann Stieltjes $\int_{a}^{b}f\mathrm{d}g$ exists. Moreover,
there exist a constant $C_{p,q},$ depending on $p$ and $q$ only,
such that 
\[
\left\Vert \int_{a}^{b}f\mathrm{d}g-f\left(a\right)\left[g\left(b\right)-g\left(a\right)\right]\right\Vert _{V}\leq C_{p,q}\left\Vert f\right\Vert _{p-\emph{var},[a,b]}^{p-p/q}\left\Vert f\right\Vert _{\emph{osc},[a,b]}^{1+p/q-p}\left\Vert g\right\Vert _{q-\emph{var},[a,b]}.
\]
\end{coro}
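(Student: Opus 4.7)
My plan is to apply Theorem~\ref{main} with geometric sequences $\eta_k=\eta_{-1}\rho^{k+1}$ and $\theta_k=B\sigma^{k+1}$, where $\rho,\sigma\in(0,1)$ are constants depending only on $p$ and $q$ and $B>0$ is a scaling parameter to be optimised at the end. First, applying~\eqref{eq:p_variation} separately to $f$ and $g$ yields
\[
\TTV{g}{\ab}{\theta_k/4}\le 4^{q-1}V^q(g,\ab)\,\theta_k^{1-q},\qquad \TTV{f}{\ab}{\eta_k/4}\le 4^{p-1}V^p(f,\ab)\,\eta_k^{1-p},
\]
so that the quantity $S$ from Theorem~\ref{main} is bounded by a sum of two geometric-type series whose common ratios are $3\rho\sigma^{1-q}$ and $3\sigma\rho^{1-p}$, respectively.

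The key step is to show that $\rho,\sigma$ can be chosen so that both ratios lie in $(0,1)$. The two requirements $\sigma^{q-1}>3\rho$ and $\rho^{p-1}>3\sigma$ are simultaneously solvable for some $\rho,\sigma\in(0,1)$ if and only if $(p-1)(q-1)<1$, equivalently $p^{-1}+q^{-1}>1$. This is the sole place where the Young-type hypothesis enters, and once such $\rho,\sigma$ are fixed it immediately yields $S<\infty$; Theorem~\ref{main} then delivers both the existence of $\int_a^b f\,\dd g$ and the bound~\eqref{eq:estimate_integral}.

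With $\rho,\sigma$ fixed, the estimate reduces to
\[
S\le K_1\,\eta_{-1}B^{1-q}V^q(g,\ab)+K_2\,\eta_{-1}^{1-p}B\,V^p(f,\ab),
\]
with $K_1,K_2$ depending only on $p,q,\rho,\sigma$. Optimising over $B>0$ produces a critical point with $B^q\propto \eta_{-1}^{p}V^q(g,\ab)/V^p(f,\ab)$, at which the two summands are of the same order and collapse to
\[
S\le C_{p,q}\,\eta_{-1}^{\,1+p/q-p}\,V^p(f,\ab)^{\,1-1/q}\,V^q(g,\ab)^{\,1/q}.
\]
The proof is finished by observing $\eta_{-1}\le\tfrac12\|f\|_{\osc,\ab}$ and rewriting $V^p$ and $V^q$ in terms of the seminorms $\|f\|_{p-\mathrm{var},\ab}$ and $\|g\|_{q-\mathrm{var},\ab}$. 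I expect the main obstacle to be the two-variable compatibility system for $\rho$ and $\sigma$; the Young condition $p^{-1}+q^{-1}>1$ is exactly what is needed to solve it, and the positivity of the exponent $1+p/q-p$ on the oscillation factor (equivalent to the same condition) is the numerical trace of this constraint.
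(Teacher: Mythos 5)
Your proof is correct, and it follows the same overall strategy as the paper's: invoke Theorem \ref{main}, control the truncated variations via \eqref{eq:p_variation}, and balance the two resulting series by optimising a scale parameter. The one point where you genuinely depart from the paper is the choice of the sequences: you take them geometric, $\eta_k=\eta_{-1}\rho^{k+1}$, $\theta_k=B\sigma^{k+1}$, whereas the paper uses lacunary, doubly-exponential sequences of the form $\eta_{k-1}=\beta\, 3^{-\lambda^{k}+1}$ with $\lambda=\alpha^{2}/\left[(q-1)(p-1)\right]>1$. Your compatibility system $\sigma^{q-1}>3\rho$, $\rho^{p-1}>3\sigma$ is indeed solvable in $(0,1)^{2}$ precisely when $(p-1)(q-1)<1$: writing $u=-\ln\rho$, $v=-\ln\sigma$ it becomes $u>\ln 3+(q-1)v$ and $v>\ln 3+(p-1)u$, which forces $u\left(1-(p-1)(q-1)\right)>q\ln 3$ and conversely admits solutions under the Young condition; so both ratios $3\rho\sigma^{1-q}$ and $3\sigma\rho^{1-p}$ can be pushed below $1$ and $S<+\ns$ follows. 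The optimisation over $B$ then reproduces exactly the paper's choice of $\gamma$ (the critical $B$ satisfies $B^{q}\propto\eta_{-1}^{p}V^{q}\left(g,\ab\right)/V^{p}\left(f,\ab\right)$), yields the exponent $1+p/q-p$ on $\eta_{-1}$, and its positivity --- equivalent to $p^{-1}+q^{-1}>1$ --- justifies the final passage from $\eta_{-1}\le\frac{1}{2}\left\Vert f\right\Vert _{\osc,\ab}$ to the stated inequality. Your variant is arguably more transparent at the one technical juncture: the geometric choice reduces the role of the Young condition to the solvability of a two-by-two linear system in $\ln\rho,\ln\sigma$, while the paper's doubly-exponential sequences decay faster than is needed once the power-law bound \eqref{eq:p_variation} is available; the only difference in the outcome is the value of the constant $C_{p,q}$, which is immaterial to the statement.
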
 
\begin{proof} By Theorem \ref{main} it is enough to
prove that for some positive sequences $\eta_{0}\geq\eta_{1}\geq\ldots$
and $\theta_{0}\geq\theta_{1}\geq\ldots,$ such that $\eta_{k}\downarrow0,$
$\theta_{k}\downarrow0$ as $k\rightarrow+\infty,$ and $\eta_{-1}=\sup_{a\leq t\leq b}\left\Vert f\left(t\right)-f\left(a\right)\right\Vert _{L\left(E,V\right)},$
one has 
\begin{align*}
S: & =4\sum_{k=0}^{+\infty}3^{k}\eta_{k-1}\cdot\TTV g{[a,b]}{\theta_{k}/4}+4\sum_{k=0}^{+\infty}3^{k}\theta_{k}\cdot\TTV f{[a,b]}{\eta_{k}/4},\\
 & \leq C_{p,q}\left\Vert f\right\Vert _{p-\text{var},[a,b]}^{p-p/q}\left\Vert f\right\Vert _{\text{osc},[a,b]}^{1+p/q-p}\left\Vert g\right\Vert _{q-\text{var},[a,b]}.
\end{align*}
The proof will follow from the proper choice of the sequences $\left(\eta_{k}\right)$
and $\left(\theta_{k}\right).$ Choose 
\[
\alpha=\frac{\sqrt{(q-1)(p-1)}+1}{2},\quad\beta=\frac{1}{2}\sup_{a\leq t\leq b}\left\Vert f\left(t\right)-f\left(a\right)\right\Vert _{L\left(E,V\right)},
\]
\[
\gamma=\left(V^{q}\left(g,[a,b]\right)\right/V^{p}\left(f,[a,b]\right))^{1/q}\beta^{p/q}
\]
and for $k=0,1,\ldots,$ define 
\[
\eta_{k-1}=\beta\cdot3^{-\left(\alpha^{2}/\left[\left(q-1\right)\left(p-1\right)\right]\right)^{k}+1}
\]
and
\[
\theta_{k}=\gamma\cdot3^{-\left(\alpha^{2}/\left[\left(q-1\right)\left(p-1\right)\right]\right)^{k}\alpha/\left(q-1\right)}.
\]
By (\ref{eq:p_variation}), similarly as in the proof of \cite[Corollary 2]{LochowskiJIA:2015},
one estimates that 
\begin{eqnarray*}
S & = & 4\sum_{k=0}^{+\infty}3^{k}\eta_{k-1}\cdot\TTV g{[a,b]}{\theta_{k}/4}+4 \sum_{k=0}^{+\infty}3^{k}\theta_{k}\cdot\TTV f{[a,b]}{\eta_{k}/4}\\
 & \leq & \left(\sum_{k=0}^{+\infty}3^{k+1-\left(1-\alpha\right)\left(\alpha^{2}/\left[\left(q-1\right)\left(p-1\right)\right]\right)^{k}}\right)4^{q}V^{q}\left(g,[a,b]\right)\beta\gamma^{1-q}\\
 &  & +\left(\sum_{k=0}^{+\infty}3^{k+1-p-\alpha\left(1-\alpha\right)\left(\alpha^{2}/\left[\left(q-1\right)\left(p-1\right)\right]\right)^{k}/\left(q-1\right)}\right)4^{p}V^{p}\left(f,[a,b]\right)\beta^{1-p}\gamma.
\end{eqnarray*}
Since $p^{-1}+q^{-1}>1$ we have $\left(q-1\right)\left(p-1\right)<1,$ $\alpha<1$ and  $\alpha^{2}/\left[\left(q-1\right)\left(p-1\right)\right]>1.$ From this
we easily infer that $S<+\infty$ and that the integral
$\int_{a}^{b}f\mathrm{d}g$ exists. Moreover, denoting 
\begin{align*}
C_{p,q} & = 4^{q} \sum_{k=0}^{+\infty}3^{k+1-\left(1-\alpha\right)\left(\alpha^{2}/\left[\left(q-1\right)\left(p-1\right)\right]\right)^{k}} \\ & \quad + 4^{p}\sum_{k=0}^{+\infty}3^{k+1-p-\alpha\left(1-\alpha\right)\left(\alpha^{2}/\left[\left(q-1\right)\left(p-1\right)\right]\right)^{k}/\left(q-1\right)} 
\end{align*}
we get 
\begin{align*}
S & \leq C_{p,q}\left(V^{q}\left(g,[a,b]\right)\right)^{1/q}\left(V^{p}\left(f,[a,b]\right)\right)^{1-1/q}\beta^{1+p/q-p}\\
 & \leq C_{p,q}\left\Vert g\right\Vert _{q-\text{var},[a,b]}\left\Vert f\right\Vert _{p-\text{var},[a,b]}^{p-p/q}\left\Vert f\right\Vert _{\text{osc},[a,b]}^{1+p/q-p}.
\end{align*}
\end{proof}

\section{Spaces ${\cal U}^{p}\rbr{[a,b],W}$} 

\subsection{${\cal U}^{p}\rbr{[a,b],W}$ as a Banach space.} Let
$p\ge1$ and $W$ be a Banach space. In this subsection we will prove
that the family ${\cal U}^{p}\rbr{[a,b],W}$ of functions functions
$f:\ab\ra W,$ such that $\sup_{\delta>0}\delta^{p-1}\TTV f{\ab}{\delta}<+\ns$
is a Banach space, and the functional 
\[
\left\Vert \cdot\right\Vert _{p-\text{TV},[a,b]}:{\cal U}^{p}\rbr{[a,b]}\ra\left[0,+\ns\right)
\]
defined by 
\begin{equation}
\left\Vert f\right\Vert _{p-\text{TV},[a,b]}:=\left(\sup_{\delta>0}\delta^{p-1}\TTV f{\ab}{\delta}\right)^{1/p}\label{eq:TV_norm}
\end{equation}
is a semi-norm on this space (while the functional $\left\Vert f\right\Vert _{\text{TV},p,[a,b]}=\left\Vert f\left(a\right)\right\Vert _{W}+\left\Vert f\right\Vert _{p-\text{TV},[a,b]}$
is a norm). From (\ref{eq:p_variation}) it follows that
\begin{equation}\label{norm_comp}
\left\Vert f\right\Vert _{p-\text{TV},[a,b]} \le \left\Vert f\right\Vert _{p-\text{var},[a,b]}
\end{equation}
thus ${\cal V}^{p}\rbr{[a,b],W}\subset{\cal U}^{p}\rbr{[a,b],W}.$
It appears that this inclusion is strict. For example, if $0\le a<b$ then a real,
symmetric $\alpha$-stable process $X$ with $\alpha \in (1,2]$ has finite $p$-variation for $p>\alpha$ while (as it was already mentioned in the Introduction) its $\alpha$-variation is a.s. infinite (on any proper, compact subinterval of $\left[0,+\ns\right)$). On the other hand, trajectories of $X$ belong a.s. to  ${\cal U}^{\alpha}\rbr{[0,t],\R}$ for any $t\ge 0,$ see \cite{LochowskiMilosLevy:2014}.
For another example see \cite[Theorem 17]{TronelVladimirov:2000}.

From the results of the next subsection it will also follow that
\[{\cal U}^{p}\rbr{[a,b],W}\subset\bigcap_{q>p}{\cal V}^{q}\rbr{[a,b],W}
\]
but, again, this inclusion is strict. 
\begin{rem} For further
justifcation of the importance of the spaces ${\cal U}^{p}\rbr{[a,b],W},$
$p > 1,$ let us also notice that if $W=L\left(E,V\right),$ $f$
belongs to ${\cal U}^{q}\rbr{[a,b],W}$ and $g$ belongs to
${\cal U}^{q}\rbr{[a,b],E}$ for some $q > 1$ such that $p^{-1}+q^{-1}>1,$
and $f$ and $g$ have no common points of discontinuity, then the
integral $\int_{a}^{b}f\dd g$ still exist and we have the estimate
\[
\left\Vert \int_{a}^{b}f\mathrm{d}g-f\left(a\right)\left[g\left(b\right)-g\left(a\right)\right]\right\Vert _{V}\leq C_{p,q}\left\Vert f\right\Vert _{p-TV,[a,b]}^{p-p/q}\left\Vert f\right\Vert _{\emph{osc},[a,b]}^{1+p/q-p}\left\Vert g\right\Vert _{q-\emph{TV},[a,b]},
\]
with the same constant $C_{p,q}$ which appears in Corollary \ref{corol_Young}.
This follows from the fact that in the proof of Corollary \ref{corol_Young}
we were using only estimate (\ref{eq:p_variation}), which now may
be replaced by the estimate 
\begin{equation} \label{TV_estimate}
\TTVemph f{\ab}{\delta}\le\left\Vert f\right\Vert _{p-\emph{TV},[a,b]}^{p} \delta^{1-p}
\end{equation}
valid for any $\delta>0,$ stemming directly from the definition of the norm $\left\Vert \cdot \right\Vert_{p-\emph{TV},[a,b]}.$ \end{rem} 
\begin{prop} \label{Banach_space}
For any $p\geq1,$ the functional $\left\Vert \cdot\right\Vert _{p-\emph{TV},[a,b]}$
is a seminorm and the functional $\left\Vert \cdot\right\Vert _{\emph{TV},p,[a,b]}$
is a norm on ${\cal U}^{p}\rbr{[a,b],W}.$ ${\cal U}^{p}\rbr{[a,b],W}$
equipped with this norm is a Banach space. \end{prop}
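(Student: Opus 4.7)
The plan is to verify the seminorm/norm axioms and then prove completeness via uniform convergence combined with a lower-semicontinuity argument for $\TTV{\cdot}{\ab}{\delta}$.

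For the seminorm axioms, nonnegativity is evident and absolute homogeneity follows from the rescaling identity $\TTV{\lambda f}{\ab}{\delta} = |\lambda|\TTV{f}{\ab}{\delta/|\lambda|}$ for $\lambda \neq 0$: substituting $\delta' = \delta/|\lambda|$ in (\ref{eq:TV_norm}) pulls out a factor $|\lambda|^p$. The step I expect to be the main obstacle is the triangle inequality. I would begin from the elementary pointwise bound
\[ \max\cbr{a+b-\delta,0} \le \max\cbr{a - \delta_1, 0} + \max\cbr{b - \delta_2, 0}, \]
valid for any $a,b \ge 0$ and any nonnegative split $\delta = \delta_1 + \delta_2$. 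Applied with $a = \|f(t) - f(s)\|_W$, $b = \|g(t) - g(s)\|_W$ and summed over an arbitrary partition of $\ab$, this gives $\TTV{f+g}{\ab}{\delta} \le \TTV{f}{\ab}{\delta_1} + \TTV{g}{\ab}{\delta_2}$. Setting $\delta_1 = \lambda\delta$, $\delta_2 = (1-\lambda)\delta$, multiplying by $\delta^{p-1}$ and taking the supremum in $\delta$ produces the Minkowski-type inequality
\[ \|f+g\|_{p\text{-TV},[a,b]}^p \le \lambda^{-(p-1)} \|f\|_{p\text{-TV},[a,b]}^p + (1-\lambda)^{-(p-1)} \|g\|_{p\text{-TV},[a,b]}^p. \]
Minimization over $\lambda \in (0,1)$ at $\lambda = \|f\|_{p\text{-TV},[a,b]}/(\|f\|_{p\text{-TV},[a,b]} + \|g\|_{p\text{-TV},[a,b]})$ collapses the right side to $(\|f\|_{p\text{-TV},[a,b]} + \|g\|_{p\text{-TV},[a,b]})^p$, giving subadditivity.

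For the norm property only definiteness remains: if $\|f\|_{p\text{-TV},[a,b]} = 0$, then single-interval partitions $\cbr{s,t}$ yield $\|f(t) - f(s)\|_W \le \delta$ for every $\delta > 0$, so $f$ is constant; combined with $f(a) = 0$ this forces $f \equiv 0$.

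For completeness I would use two ingredients. First, a uniform control estimate: testing the truncated variation on $\cbr{a,t}$ gives $\|f(t) - f(a)\|_W \le \delta + \TTV{f}{\ab}{\delta} \le \delta + \|f\|_{p\text{-TV},[a,b]}^p \delta^{1-p}$ for all $\delta > 0$, so optimization in $\delta$ yields (for $p > 1$) $\sup_{t \in \ab}\|f(t) - f(a)\|_W \le C_p \|f\|_{p\text{-TV},[a,b]}$ with an explicit $C_p$, while the case $p = 1$ follows by sending $\delta \to 0$. Hence any Cauchy sequence $(f_n)$ in $\|\cdot\|_{\text{TV},p,[a,b]}$ is uniformly Cauchy and converges uniformly to some $f\colon\ab \to W$ by completeness of $W$. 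Second, a Fatou-type semicontinuity: for every partition and every $\delta > 0$, the partition sum for $f - f_n$ is the pointwise limit, as $m \to \infty$, of the analogous sum for $f_m - f_n$, so $\TTV{f - f_n}{\ab}{\delta} \le \liminf_{m\to\infty} \TTV{f_m - f_n}{\ab}{\delta}$; multiplying by $\delta^{p-1}$ and taking the supremum in $\delta$ gives $\|f - f_n\|_{p\text{-TV},[a,b]} \le \liminf_m \|f_m - f_n\|_{p\text{-TV},[a,b]}$. Combined with the Cauchy condition, this shows $f - f_n \in \mathcal{U}^p\rbr{[a,b],W}$ (hence $f \in \mathcal{U}^p\rbr{[a,b],W}$) and $\|f - f_n\|_{\text{TV},p,[a,b]} \to 0$.
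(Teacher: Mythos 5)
Your proof is correct, but it departs from the paper's argument at the two substantive points. For the triangle inequality the paper does \emph{not} use the splitting bound $\TTV{f+h}{[a,b]}{\delta_{1}+\delta_{2}}\leq\TTV f{[a,b]}{\delta_{1}}+\TTV h{[a,b]}{\delta_{2}}$ at this stage; instead it fixes a near-optimal partition, computes the supremum over $\delta$ of the partition sum \emph{exactly} via the non-decreasing rearrangement $x_{1}^{*}\le\ldots\le x_{n}^{*}$ of the increments, obtaining the closed form
\[
\left(\sup_{\delta>0}\delta^{p-1}\sum_{i=1}^{n}\left(x_{i}-\delta\right)_{+}\right)^{1/p}=\max_{j}\left(n-j+1\right)^{1/p-1}c_{p}^{1/p}\sum_{i=j}^{n}x_{i}^{*},
\]
and then uses superadditivity of rearrangement tail sums. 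Your route -- the split $\delta=\lambda\delta+(1-\lambda)\delta$ followed by optimisation at $\lambda=A/(A+B)$, which collapses $\lambda^{1-p}A^{p}+(1-\lambda)^{1-p}B^{p}$ to $(A+B)^{p}$ -- is shorter and relies only on the elementary estimate (\ref{elementary_estimate}), which the paper itself proves and uses later for completeness; the paper's computation is heavier but yields the identity (\ref{eq:optim}), which is of independent interest. (Mind the degenerate case $A=B=0$ or one of them zero, handled by letting $\lambda$ tend to an endpoint.) For completeness, both arguments start from the same uniform-norm lower bound (the paper's version is $\left\Vert f\right\Vert _{\text{TV},p,[a,b]}\geq c_{p}^{1/p}\left\Vert f\right\Vert _{\ns,[a,b]}$, yours an equivalent optimisation in $\delta$), but then diverge: you use the standard Fatou-type lower semicontinuity of $\TTV{\cdot}{[a,b]}{\delta}$ under pointwise convergence applied to $f_{m}-f_{n}$, whereas the paper argues by contradiction, extracting $\delta_{k}$ with $\delta_{k}^{p-1}\TTV{f_{n_{k}}-f_{\ns}}{[a,b]}{\delta_{k}}\geq\kappa^{p}$ and using the splitting bound together with $\TTV{f_{n}-f_{\ns}}{[a,b]}{\delta_{k_{0}}/2}=0$ for $n$ large. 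Your semicontinuity argument is the more transparent of the two and avoids the subsequence bookkeeping; both are valid.
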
 
\begin{proof}
For $p=1,$ $\left\Vert \cdot\right\Vert _{p-\text{TV},[a,b]}$
coincides with $V^{1}\rbr{f,[a,b]},$ $\left\Vert \cdot\right\Vert _{\text{TV},p,[a,b]}$
coincides with the $1$-variation norm $\left\Vert f\right\Vert _{\text{var},1,[a,b]}:=\left\Vert f(a)\right\Vert _{W}+V^{1}\rbr{f,[a,b]}$
and ${\cal U}^{1}\rbr{[a,b],W}$ is simply the same as the space of
functions with bounded total variation. Therefore, for the rest of
the proof we will assume that $p>1.$ 

The homogenity of $\left\Vert \cdot\right\Vert _{p-\text{TV},[a,b]}$
and $\left\Vert \cdot\right\Vert _{\text{TV},p,[a,b]}$ follows
easily from the fact that for $\alpha,\delta>0,$ $\TTV{\alpha f}{[a,b]}{\alpha\delta}=\alpha\TTV f{[a,b]}{\delta},$
which is the consequence of the equality 
\[
\max\left\{ \left\Vert \alpha f\left(t\right)-\alpha f\left(s\right)\right\Vert _{W}-\alpha\delta,0\right\} =\alpha\max\left\{ \left\Vert f\left(t\right)-f\left(s\right)\right\Vert _{W}-\delta,0\right\} .
\]

To prove the triangle inequality, let us take $f,h\in{\cal U}^{p}\rbr{[a,b]}$
and fix $\varepsilon>0.$ Let $\delta_{0}>0$ and $a\leq t_{0}<t_{1}<\ldots<t_{n}\leq b$
be such that 
\begin{align}
\left(\delta_{0}^{p-1}\sum_{i=1}^{n}\left(\left\Vert f\left(t_{i}\right)-f\left(t_{i-1}\right)+h\left(t_{i}\right)-h\left(t_{i-1}\right)\right\Vert _{W}-\delta_{0}\right)_{+}\right)^{1/p} & \geq\left\Vert f+h\right\Vert _{p-\text{TV};[a,b]}-\varepsilon,\label{eq:approx_eps}
\end{align}
where $\left(\cdot\right)_{+}$ denotes $\max\left\{ \cdot,0\right\} .$
By standard calculus, for $x>0$ and $p\geq1$ we have 
\begin{equation}
\sup_{\delta>0}\delta^{p-1}\left(x-\delta\right)_{+}=\sup_{\delta\geq0}\delta^{p-1}\left(x-\delta\right)=c_{p}x^{p}\label{eq:supremum}
\end{equation}
where $c_{p}=(p-1)^{p-1}/p^{p}\in\sbr{2^{-p};1}.$ Denote $x_{0}^{*}=0$
and for $i=1,2,\ldots,n$ define $x_{i}=\left\Vert f\left(t_{i}\right)-f\left(t_{i-1}\right)+h\left(t_{i}\right)-h\left(t_{i-1}\right)\right\Vert _{W}.$
Let $x_{1}^{*}\leq x_{2}^{*}\leq\ldots\leq x_{n}^{*}$ be the non-decreasing
re-arrangement of the sequence $\left(x_{i}\right).$ Notice that
by (\ref{eq:supremum}) for $\delta\in\left[x_{j-1}^{*};x_{j}^{*}\right],$
where $j=1,2,\ldots,n,$ one has 
\begin{align*}
 & \delta^{p-1}\sum_{i=1}^{n}\left(\left\Vert f\left(t_{i}\right)-f\left(t_{i-1}\right)+h\left(t_{i}\right)-h\left(t_{i-1}\right)\right\Vert _{W}-\delta\right)_{+}\\
 & =\delta^{p-1}\sum_{i=j}^{n}\left(x_{i}^{*}-\delta\right)=\delta^{p-1}\left(\sum_{i=j}^{n}x_{i}^{*}-\left(n-j+1\right)\delta\right)\\
 & =\left(n-j+1\right)\delta^{p-1}\left(\frac{\sum_{i=j}^{n}x_{i}^{*}}{n-j+1}-\delta\right)\leq\left(n-j+1\right)c_{p}\left(\frac{\sum_{i=j}^{n}x_{i}^{*}}{n-j+1}\right)^{p}.
\end{align*}
Hence 
\begin{align}
 & \sup_{\delta>0}\delta^{p-1}\sum_{i=1}^{n}\left(\left\Vert f\left(t_{i}\right)-f\left(t_{i-1}\right)+h\left(t_{i}\right)-h\left(t_{i-1}\right)\right\Vert _{W}-\delta\right)_{+}\nonumber \\
 & \leq\max_{j=1,2,\dots,n}\left(n-j+1\right)c_{p}\left(\frac{\sum_{i=j}^{n}x_{i}^{*}}{n-j+1}\right)^{p}.\label{eq:nier_1}
\end{align}
On the other hand, 
\begin{align}
 & \sup_{\delta>0}\delta^{p-1}\sum_{i=1}^{n}\left(\left\Vert f\left(t_{i}\right)-f\left(t_{i-1}\right)+h\left(t_{i}\right)-h\left(t_{i-1}\right)\right\Vert _{W}-\delta\right)_{+}\nonumber \\
 & =\sup_{\delta>0}\delta^{p-1}\sum_{i=1}^{n}\left(x_{i}^{*}-\delta\right)_{+}\geq\sup_{\delta>0}\max_{j=1,2,\dots,n}\delta^{p-1}\sum_{i=j}^{n}\left(x_{i}^{*}-\delta\right)\nonumber \\
 & =\max_{j=1,2,\dots,n}\sup_{\delta>0}\delta^{p-1}\sum_{i=j}^{n}\left(x_{i}^{*}-\delta\right)\nonumber \\
 & =\max_{j=1,2,\dots,n}\left(n-j+1\right)c_{p}\left(\frac{\sum_{i=j}^{n}x_{i}^{*}}{n-j+1}\right)^{p}.\label{eq:nier_2}
\end{align}
By (\ref{eq:nier_1}) and (\ref{eq:nier_2}) we get 
\begin{align}
 & \left(\sup_{\delta>0}\delta^{p-1}\sum_{i=1}^{n}\left(\left\Vert f\left(t_{i}\right)-f\left(t_{i-1}\right)+h\left(t_{i}\right)-h\left(t_{i-1}\right)\right\Vert _{W}-\delta\right)_{+}\right)^{1/p}\nonumber \\
 & =\max_{j=1,2,\dots,n}\left(n-j+1\right)^{1/p-1}c_{p}^{1/p}\sum_{i=j}^{n}x_{i}^{*}.\label{eq:optim}
\end{align}
Similarly, denoting by $y_{i}^{*}$ and $z_{i}^{*}$ the non-decreasing
rearrangements of the sequences $y_{i}=\left\Vert f\left(t_{i}\right)-f\left(t_{i-1}\right)\right\Vert _{W}$
and $z_{i}=\left\Vert h\left(t_{i}\right)-h\left(t_{i-1}\right)\right\Vert _{W}$
respectively, we get 
\begin{align*}
\left\Vert f\right\Vert _{p-\text{TV},[a,b]} & \ge\left(\sup_{\delta>0}\delta^{p-1}\sum_{i=1}^{n}\left(\left\Vert f\left(t_{i}\right)-f\left(t_{i-1}\right)\right\Vert _{W}-\delta\right)_{+}\right)^{1/p}\\
 & =\max_{j=1,2,\dots,n}\left(n-j+1\right)^{1/p-1}c_{p}^{1/p}\sum_{i=j}^{n}y_{i}^{*}
\end{align*}
and 
\begin{align*}
\left\Vert h\right\Vert _{p-\text{TV},[a,b]} & \ge\left(\sup_{\delta>0}\delta^{p-1}\sum_{i=1}^{n}\left(\left\Vert h\left(t_{i}\right)-h\left(t_{i-1}\right)\right\Vert _{W}-\delta\right)_{+}\right)^{1/p}\\
 & =\max_{j=1,2,\dots,n}\left(n-j+1\right)^{1/p-1}c_{p}^{1/p}\sum_{i=j}^{n}z_{i}^{*}.
\end{align*}
By the triangle inequality and the definition of $y_{i}^{*}$ and
$z_{i}^{*}$ for $j=1,2,\ldots,n,$ we have $\sum_{i=j}^{n}x_{i}^{*}\leq\sum_{i=j}^{n}y_{i}^{*}+\sum_{i=j}^{n}z_{i}^{*}.$
Hence 
\begin{align*}
 & \max_{j=1,2,\dots,n}\left(n-j+1\right)^{1/p-1}c_{p}^{1/p}\sum_{i=j}^{n}x_{i}^{*}\leq\max_{j=1,2,\dots,n}\left(n-j+1\right)^{1/p-1}c_{p}^{1/p}\sum_{i=j}^{n}\left(y_{i}^{*}+z_{i}^{*}\right)\\
 & \leq\max_{j=1,2,\dots,n}\left(n-j+1\right)^{1/p-1}c_{p}^{1/p}\sum_{i=j}^{n}y_{i}^{*}+\max_{j=1,2,\dots,n}\left(n-j+1\right)^{1/p-1}c_{p}^{1/p}\sum_{i=j}^{n}z_{i}^{*}\\
 & \leq\left\Vert f\right\Vert _{p-\text{TV},[a,b]}+\left\Vert h\right\Vert _{p-\text{TV},[a,b]}.
\end{align*}
Finally, by (\ref{eq:approx_eps}), (\ref{eq:optim}) and the last
estimate, we get 
\[
\left\Vert f+g\right\Vert _{p-\text{TV},[a,b]}-\varepsilon\leq\left\Vert f\right\Vert _{p-\text{TV},[a,b]}+\left\Vert g\right\Vert _{p-\text{TV},[a,b]}.
\]
Sending $\varepsilon$ to $0$ we get the triangle inequality for
$\left\Vert \cdot\right\Vert _{p-\text{TV},[a,b]}.$ From
this also follows the triangle inequality for $\left\Vert \cdot\right\Vert _{\text{TV},p,[a,b]}.$

Now we will prove that the space ${\cal U}^{p}\rbr{[a,b],W}$
equipped with the norm $\left\Vert \cdot\right\Vert _{\text{TV},p,[a,b]}$
is a Banach space. To prove this we will need the following inequality
\begin{equation}
\TTV{f+g}{[a,b]}{\delta_{1}+\delta_{2}}\leq\TTV f{[a,b]}{\delta_{1}}+\TTV g{[a,b]}{\delta_{2}}\label{TVsplit}
\end{equation}
for any $\delta_{1},\delta_{2}\geq0.$ It follows from the elementary
estimate 
\begin{equation}
\rbr{\left\Vert w_{1}-w_{2}\right\Vert _{W}-\delta_{1}-\delta_{2}}_{+}\leq\rbr{\left\Vert w_{1}\right\Vert _{W}-\delta_{1}}_{+}+\rbr{\left\Vert w_{2}\right\Vert _{W}-\delta_{2}}_{+}\label{elementary_estimate}
\end{equation}
valid for any $w_{1},w_{2}\in W$ and nonnegative $\delta_{1}$ and
$\delta_{2}.$ We also have $\TTV f{[a,b]}{\delta}\geq\rbr{\left\Vert f\right\Vert _{\text{osc},[a,b]}-\delta}_{+}.$
From this and (\ref{eq:supremum}) it follows that 
\begin{equation}
\left\Vert f\right\Vert _{\text{TV},p,[a,b]}\geq\left|f(a)\right|+c_{p}^{1/p}\left\Vert f\right\Vert _{\text{osc},[a,b]}\geq c_{p}^{1/p}\left\Vert f\right\Vert _{\ns,[a,b]},\label{infinite_norm_est}
\end{equation}
where $\left\Vert f\right\Vert _{\ns,[a,b]}:=\sup_{t\in\ab}\left\Vert f\left(t\right)\right\Vert _{W}.$
Hence any Cauchy sequence $\rbr{f_{n}}_{n=1}^{\ns}$ in ${\cal U}^{p}\rbr{[a,b],W}$
converges uniformly to some $f_{\ns}:[a,b]\ra W.$ Assume that $\left\Vert f_{\ns}-f_{n}\right\Vert _{\text{TV},p,[a,b]}\nrightarrow0$
as $n\ra+\ns.$ Thus, there exist a positive number $\kappa$, a sequence
of positive integers $n_{k}\ra+\ns$ and a sequence of positive reals
$\delta_{k},$ $k=1,2,\ldots,$ such that $\delta_{k}^{p-1}\TTV{f_{n_{k}}-f_{\ns}}{[a,b]}{\delta_{k}}\geq\kappa^{p}.$
Let $N$ be a positive integer such that 
\begin{equation}
\left\Vert f_{m}-f_{n}\right\Vert _{\text{TV},p,[a,b]}<\kappa/2^{1-1/p}\text{ for }m,n\geq N\label{Cauchy}
\end{equation}
and $k_{0}$ be the minimal positive integer such that $n_{k_{0}}\geq N.$
For sufficiently large $n\geq N$ we have $\left\Vert f_{n}-f_{\ns}\right\Vert _{\ns,[a,b]}\leq\delta_{k_{0}}/4,$
hence $\left\Vert f_{n}-f_{\ns}\right\Vert _{\text{osc},[a,b]}\leq\delta_{k_{0}}/2$
and 
\begin{equation}
\TTV{f_{n}-f_{\ns}}{[a,b]}{\delta_{k_{0}}/2}=0.\label{TVzero}
\end{equation}
Now, by (\ref{TVsplit}) 
\[
\TTV{f_{n_{k_{0}}}-f_{\ns}}{[a,b]}{\delta_{k_{0}}}\leq\TTV{f_{n_{k_{0}}}-f_{n}}{[a,b]}{\delta_{k_{0}}/2}+\TTV{f_{n}-f_{\ns}}{[a,b]}{\delta_{k_{0}}/2}.
\]
From this and (\ref{TVzero}) we get 
\[
\rbr{\delta_{k_{0}}/2}^{p-1}\TTV{f_{n_{k_{0}}}-f_{n}}{[a,b]}{\delta_{k_{0}}/2}\geq\delta_{k_{0}}^{p-1}\TTV{f_{n_{k_{0}}}-f_{\ns}}{[a,b]}{\delta_{k_{0}}}/2^{p-1}\geq\kappa^{p}/2^{p-1}
\]
but this (recall (\ref{eq:TV_norm})) contradicts (\ref{Cauchy}).
Thus, the sequence $\rbr{f_{n}}_{n=1}^{\ns}$ converges in ${\cal U}^{p}\rbr{[a,b],W}$
norm to $f_{\ns}.$ Since the sequence $\rbr{f_{n}}_{n=1}^{\ns}$
was chosen in an arbitrary way, it proves that ${\cal U}^{p}\rbr{[a,b],W}$
is complete. \end{proof} 
\begin{rem} It is easy to see that the
space ${\cal U}^{p}\rbr{[a,b],W}$ equipped with the norm $\left\Vert \cdot\right\Vert _{\emph{TV},p,[a,b]}$
is not separable. To see this it is enough for two distinct vectors
$w_{1}$ and $w_{2}$ from $W$ consider the family of functions $f_{t}:[a,b]\ra\cbr{w_{1},w_{2}},$
$f_{t}(s):={\bf 1}_{\cbr t}(s)w_{1}+(1-{\bf 1}_{\cbr t}(s))w_{2},$
$t\in\ab,$ (${\bf 1}_{A}$ denotes here the indicator function of
a set $A$) and apply (\ref{infinite_norm_est}). However, we do not
know if the subspace of continuous functions in ${\cal U}^{p}\rbr{[a,b],\R}$
is separable. 
\end{rem} 
\begin{rem} \label{superadditivity} From
the triangle inequality for $\left\Vert \cdot\right\Vert _{p-\emph{TV},[a,b]}$
it follows that it is an subadditivie functional of the interval,
i.e., for any $p\geq1,$ $f:[a,b]\ra W$ and $d\in(a,b),$ 
\[
\left\Vert f\right\Vert _{p-\emph{TV},[a,b]}\leq\left\Vert f\right\Vert _{p-\emph{TV},\left[a,d\right]}+\left\Vert f\right\Vert _{p-\emph{TV},\left[d,b\right]}.
\]
To see this it is enough to consider the following decomposition $f(t)=f_{1}(t)+f_{2}(t),$
$f_{1}(t)={\bf 1}_{[a,d]}(t)f(t)+{\bf 1}_{(d,b]}(t)f(d),$ $f_{2}(t)={\bf 1}_{(d,b]}(t)f(t)-{\bf 1}_{(d,b]}(t)f(d).$
We naturally have 
\begin{align*}
 & \left\Vert f\right\Vert _{p-\emph{TV},[a,b]}=\left\Vert f_{1}+f_{2}\right\Vert _{p-\emph{TV},[a,b]}\\
 & \leq\left\Vert f_{1}\right\Vert _{p-\emph{TV},[a,b]}+\left\Vert f_{2}\right\Vert _{p-\emph{TV},[a,b]}\\
 & =\left\Vert f\right\Vert _{p-\emph{TV},\left[a,d\right]}+\left\Vert f\right\Vert _{p-\emph{TV},\left[d,b\right]}.
\end{align*}
However, superadditivity, as a function of the interval, holding for
$\left\Vert \cdot\right\Vert _{p-\emph{var},[a,b]}^{p}=V^{p}\rbr{\cdot,[a,b]}$
is no more valid for $\left\Vert \cdot\right\Vert _{p-\emph{TV},[a,b]}^{p}.$
To see this it is enough to consider the function $f:[-1;1]\ra\cbr{-1,0,1},$
$f(t)={\bf 1}_{(-1,1)}(t)-{\bf 1}_{\cbr 1}(t).$ We have $\TTVemph f{[-1,0]}{\delta}=\rbr{1-\delta}_{+},$
$\TTVemph f{[0,1]}{\delta}=\rbr{2-\delta}_{+}$ and $\TTVemph f{[-1,1]}{\delta}=\rbr{1-\delta}_{+}+\rbr{2-\delta}_{+}$
hence $\left\Vert f\right\Vert _{2-\emph{TV},\left[-1;1\right]}^{2}=9/8<\left\Vert f\right\Vert _{2-\emph{TV},\left[-1;0\right]}^{2}+\left\Vert f\right\Vert _{2-\emph{TV},\left[0;1\right]}^{2}=1/4+1.$
\end{rem} 

\subsection{$\phi-$variation of the functions from the space ${\cal U}^{p}\left(\ab, W\right)$} 
For a (non-decreasing) function $\phi:\left[0,+\ns\right)\ra\left[0,+\ns\right)$
let us define the $\phi$-variation of $f:\ab\ra W$ as 
\[
V^{\phi}\left(f,\ab\right):=\sup_{n}\sup_{a\le t_{0}<t_{1}<\ldots<t_{n}\le b}\sum_{i=1}^{n}\phi\left(\left\Vert f\left(t_{i}\right)-f\left(t_{i-1}\right)\right\Vert _{W}\right).
\]
In this subsection we will prove
the following result. 
\begin{prop}.\label{prop_Loch} Let $p\ge1$
and suppose that $\phi:\left[0,+\ns\right)\ra\left[0,+\ns\right)$
is such that $\phi\left(0\right)=0$ and for each $t>0,$ $\phi\left(t\right)>0,$
\begin{equation}
\sup_{0<u\leq s\leq2u\leq2t}\frac{\phi\left(s\right)}{\phi\left(u\right)}<+\ns\quad\mbox{and}\quad\sum_{j=0}^{+\ns}2^{pj}\phi\left(2^{-j}\right)<+\ns.\label{eq:assumptions}
\end{equation}
Then for any function $f\in{\cal U}^{p}\left(\ab, W\right)$ one has
$\V f{\ab}{\phi}<+\ns.$ 
\end{prop}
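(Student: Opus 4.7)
The plan is to fix an arbitrary partition of $\ab$, decompose its increments of $f$ into dyadic bands according to size, and bound the number of terms in each band via the truncated-variation estimate \eqref{TV_estimate}. The only fact about $f$ that I will use beyond its belonging to ${\cal U}^{p}\rbr{[a,b],W}$ is that, being regulated, it is bounded, so $M:=\left\Vert f\right\Vert _{\osc,\ab}<\ns$.

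Fix a partition $a\le t_{0}<t_{1}<\ldots<t_{n}\le b$, write $x_{i}:=\left\Vert f(t_{i})-f(t_{i-1})\right\Vert _{W}\in[0,M]$, and, using $\phi(0)=0$, discard indices with $x_{i}=0$. For each $j\in\mathbb{Z}$ set
\[
I_{j}:=\cbr{1\le i\le n:2^{-j-1}<x_{i}\le 2^{-j}}.
\]
Since $x_{i}\le M$, we have $I_{j}=\emptyset$ whenever $2^{-j-1}\ge M$, so only a bounded-below range of indices, starting from some $j_{0}\in\mathbb{Z}$, carries any mass. For $i\in I_{j}$ one has $x_{i}-2^{-j-2}\ge 2^{-j-2}$, so
\[
|I_{j}|\,2^{-j-2}\le\sum_{i=1}^{n}\rbr{x_{i}-2^{-j-2}}_{+}\le\TTV{f}{\ab}{2^{-j-2}}\le\left\Vert f\right\Vert _{p-\text{TV},\ab}^{p}\rbr{2^{-j-2}}^{1-p},
\]
the last step being \eqref{TV_estimate}. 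Rearranging gives the cardinality bound $|I_{j}|\le 4^{p}\left\Vert f\right\Vert _{p-\text{TV},\ab}^{p}\,2^{pj}$.

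Since $\phi$ is non-decreasing (as stated at the start of this subsection) and $x_{i}\le 2^{-j}$ on $I_{j}$, I would then estimate
\[
\sum_{i=1}^{n}\phi(x_{i})\le\sum_{j\ge j_{0}}|I_{j}|\,\phi\rbr{2^{-j}}\le 4^{p}\left\Vert f\right\Vert _{p-\text{TV},\ab}^{p}\sum_{j\ge j_{0}}2^{pj}\phi\rbr{2^{-j}}.
\]
The tail $\sum_{j\ge 0}2^{pj}\phi(2^{-j})$ is finite by the summability hypothesis in \eqref{eq:assumptions}, and the remaining sum $\sum_{j_{0}\le j<0}2^{pj}\phi(2^{-j})$ is a finite sum of finite terms (it may be controlled quantitatively, if desired, via the first, doubling, hypothesis in \eqref{eq:assumptions}, which compares $\phi(2^{-j})$ for $j<0$ to $\phi(1)$). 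Since the resulting bound is uniform in the partition, $V^{\phi}(f,\ab)<\ns$.

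The only step requiring care is the cardinality bound, where one must verify that a partial sum over $I_{j}$ is legitimately dominated by $\TTV{f}{\ab}{2^{-j-2}}$; this is immediate because all summands $(x_{i}-2^{-j-2})_{+}$ are non-negative. After that the argument is routine dyadic accounting, with the two hypotheses in \eqref{eq:assumptions} precisely calibrated so that the geometric factor $2^{pj}$ from the cardinality bound is absorbed by the summability of $2^{pj}\phi(2^{-j})$.
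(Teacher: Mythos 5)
Your proof is correct and follows essentially the same route as the paper's: both decompose the increments of an arbitrary partition into dyadic size bands and control each band through the estimate $\TTV f{\ab}{\delta}\le\left\Vert f\right\Vert _{p-\text{TV},\ab}^{p}\delta^{1-p}$ applied at the scale of that band, before summing with the hypothesis $\sum_{j}2^{pj}\phi\rbr{2^{-j}}<+\ns$. The only cosmetic difference is that you bound the cardinality of each band and invoke the monotonicity of $\phi$, whereas the paper bounds the sum of the increments in each band and invokes the doubling condition in (\ref{eq:assumptions}); both variants are sound.
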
 
\begin{rem}
Function $\phi$ satifies the same assumptions as in \cite[Proposition 1]{Vovk_cadlag:2011}.
\end{rem}
\begin{proof} Let $L$ be the
least positive integer such that $\sup_{t\in\left[a,b\right]}\left\Vert f\left(t\right)\right\Vert _{W}\leq2^{L}.$
Consider the partition $\pi=\left\{ a\leq t_{0}<t_{1}<\ldots<t_{n}\leq b\right\} $ such that $f\left(t_{i}\right)\neq f\left(t_{i-1}\right)$ for $i=1,2,\ldots,n$
and for $j=0,1,\ldots$ define 
\[
I_{j}=\left\{ i\in\left\{ 1,2,\ldots,n\right\} :\left\Vert f\left(t_{i}\right)-f\left(t_{i-1}\right)\right\Vert _{W}\in\left(2^{L-j},2^{L-j+1}\right]\right\} ,
\]
and $\delta\left(j\right):=2^{L-j-1}.$ Naturally, for $i\in I_{j},$
\[
\left\Vert f\left(t_{i}\right)-f\left(t_{i-1}\right)\right\Vert _{W}-\delta\left(j\right)\geq\frac{1}{2}\left\Vert f\left(t_{i}\right)-f\left(t_{i-1}\right)\right\Vert _{W}
\]
and since $\left\{ 1,2,\ldots,n\right\} =\bigcup_{j=0}^{+\infty}I_{j},$
we estimate 
\begin{align}
 & \sum_{i=1}^{n}\phi\left(\left\Vert f\left(t_{i}\right)-f\left(t_{i-1}\right)\right\Vert _{W}\right)\nonumber  = \sum_{j=0}^{+\infty}\sum_{i\in I_{j}}\phi\left(\left\Vert f\left(t_{i}\right)-f\left(t_{i-1}\right)\right\Vert _{W}\right)\nonumber \\
 & \leq\sum_{j=0}^{+\infty}\sup_{s\in\left[2^{L-j},2^{L-j+1}\right]}\frac{\phi\left(s\right)}{2^{L-j}}\sum_{i\in I_{j}}\left\Vert f\left(t_{i}\right)-f\left(t_{i-1}\right)\right\Vert _{W}\nonumber \\
 & \leq\sum_{j=0}^{+\infty}\sup_{s\in\left[2^{L-j},2^{L-j+1}\right]}\frac{\phi\left(s\right)}{2^{L-j}}\cdot2\sum_{i\in I_{j}}\max\left\{ \left\Vert f\left(t_{i}\right)-f\left(t_{i-1}\right)\right\Vert _{W}-\delta\left(j\right),0\right\} \nonumber \\
 & \leq\sum_{j=0}^{+\infty}\sup_{s\in\left[2^{L-j},2^{L-j+1}\right]}\frac{\phi\left(s\right)}{2^{L-j}}\cdot2\TTV f{[a,b]}{\delta\left(j\right)}\\
 & =\sum_{j=0}^{+\infty}\sup_{s\in\left[2^{L-j},2^{L-j+1}\right]}\frac{\phi\left(s\right)}{2^{L-j}}\cdot2\cdot\frac{1}{\delta\left(j\right)^{p-1}}\delta\left(j\right)^{p-1}\TTV f{[a,b]}{\delta\left(j\right)}\nonumber \\
 & \leq\sum_{j=0}^{+\infty}\sup_{s\in\left[2^{L-j},2^{L-j+1}\right]}\frac{\phi\left(s\right)}{2^{L-j}}\cdot2\cdot\frac{1}{\delta\left(j\right)^{p-1}}\sup_{\delta>0}\left\{ {\delta}^{p-1}\cdot\TTV f{[a,b]}{\delta}\right\} \\
 & =2^p\left\Vert f\right\Vert _{p-\TV,\ab}^{p}\sum_{j=0}^{+\infty}2^{p\left(j-L\right)}\sup_{s\in\left[2^{L-j},2^{L-j+1}\right]}\phi\left(s\right).\label{estim_last1}
\end{align}
By the first assumption in (\ref{eq:assumptions}) we have that for
all $j=0,1,\ldots,$ 
\[
\sup_{s\in\left[2^{L-j},2^{L-j+1}\right]}\phi\left(s\right)\leq C\left(\phi,L\right)\cdot\phi\left(2^{L-j}\right)
\]
for some constant $C\left(\phi,L\right)$ depending on $\phi$ and
$L$ only. Thus, by the second assumption in (\ref{eq:assumptions}),
$\sum_{j=0}^{+\ns}2^{pj}\phi\left(2^{-j}\right)<+\ns,$ we get 
\begin{align}
 & \sum_{j=0}^{+\infty}2^{p\left(j-L\right)}\sup_{s\in\left[2^{L-j},2^{L-j+1}\right]}\phi\left(s\right)\nonumber \\
 & \leq C\left(\phi,L\right)\left\{ \sum_{j=0}^{L-1}2^{p\left(j-L\right)}\phi\left(2^{L-j}\right)+\sum_{j=L}^{+\infty}2^{p\left(j-L\right)}\phi\left(2^{L-j}\right)\right\} \nonumber \\
 & =C\left(\phi,L\right)\left\{ \sum_{j=0}^{L-1}2^{p\left(j-L\right)}\phi\left(2^{L-j}\right)+\sum_{j=0}^{+\ns}2^{pj}\phi\left(2^{-j}\right)\right\} <+\ns.\label{estim_last2}
\end{align}
Since estimates (\ref{estim_last1}) and (\ref{estim_last2}) do not
depend on the partition $\pi,$ taking supremum over all partitions
of the interval $\ab$ we get $\V f{\ab}{\phi}<+\ns.$ 
\end{proof} 
\begin{rem}
From Proposition \ref{prop_Loch} it immediately follows that ${\cal U}^{p}\left(\ab,W\right)\subset{\cal V}^{q}\left(\ab,W\right)$
for any $q>p,$ since for any $q>p,$ $\phi_{q}\left(x\right)=x^{q}$
satisfies (\ref{eq:assumptions}). But it is easy to derive more exact
results. For example, by standard calculus, assumptions (\ref{eq:assumptions}) hold for 
\[\phi_{p,\gamma,1}\left(x\right):=\frac{x^{p}}{\rbr{\ln\left(1+1/x\right)}^{\gamma}} \text{ or } \phi_{p,\gamma,2}\left(x\right):=\frac{x^{p}}{\ln\left(1+1/x\right)\left(\ln\ln\left(e+1/x\right)\right)^{\gamma}} \]
when $\gamma>1.$ From this we have that $\bigcap_{q>p}{\cal V}^{q}\left(\ab,W\right)\neq{\cal U}^{q}\left(\ab,W\right)$
since there exist functions $f:\ab\ra W$ such that $\V f{\ab}{\phi_{p,2,1}}=+\ns$
but $f\in{\cal V}^{q}\left(\ab, W\right)$ for any $q>p.$ An example of such a function is the following. Let ${ w} \in W$ be such that $\left\Vert { w} \right\Vert_W = 1$ then $f:[0,1] \ra W$ is defined as 
\[
f(t)=\begin{cases}\rbr{{\ln n}/{n}}^{1/p} { w}
& \text{if }  t = {1}/{n} \text{ for } n=1,2,\ldots ;\\
{0 } & \text{otherwise.}
\end{cases}
\]
It remains an open question if it is possible to obtain finiteness of the $\phi-$variation of functions from ${\cal U}^{p}\left(\ab, W\right)$ for $\phi$ vanishing slower (as $x \ra 0+$) than $\frac{x^{p}}{{\ln\left(1+1/x\right)}} .$
\end{rem}

\subsection{Irregularity of the integrals driven by functions from ${\cal U}^{p}\left([a,b],W\right)$}

In \cite[Section 2]{Lyons:1994} there are considered $\left\Vert \cdot\right\Vert _{p-\text{var},\left[a,b\right]}$
norms of the integrals of the form $[a,b]\ni t\mapsto\int_{a}^{t}f\dd g,$
with $f\in{\cal V}^{p}\rbr{[a,b],L\left(E,V\right)}$ and $g\in{\cal V}^{q}\rbr{[a,b],E},$
where $p>1,$ $q>1$ and $p^{-1}+q^{-1}>1.$ Now, we turn to investigate
the $\left\Vert \cdot\right\Vert _{p-\text{TV},\left[a,b\right]}$
norms of similar integrals, but for $f\in{\cal U}^{p}\rbr{[a,b],L\left(E,V\right)}$
and $g\in{\cal U}^{q}\rbr{[a,b],W}.$ 
This, in view of the preceding subsection, will give us more exact results about the irregularity of the indefinite integrals $\int_{a}^{\cdot}f\dd g.$ 
We will prove the following
\begin{theo} \label{thm_integral}
Assume that $f\in{\cal U}^{p}\rbr{[a,b],L\left(E,V\right)}$ and $g\in{\cal U}^{q}\rbr{[a,b],E}$
for some $p>1,$ $q>1,$ such that $p^{-1}+q^{-1}>1$ and they have
no common points of discontinuity. Then there exist a constant $D_{p,q}<+\infty,$
depending on $p$ and $q$ only, such that 
\[
\left\Vert \int_{a}^{\cdot}\left[f\left(s\right)-f\left(a\right)\right]\mathrm{d}g\left(s\right)\right\Vert _{q-\emph{TV},\left[a,b\right]}\leq D_{p,q}\left\Vert f\right\Vert _{p-\emph{TV},\left[a,b\right]}^{p-p/q}\left\Vert f\right\Vert _{\emph{osc},\left[a,b\right]}^{1+p/q-p}\left\Vert g\right\Vert _{q-\emph{TV},\left[a,b\right]}.
\]
\end{theo}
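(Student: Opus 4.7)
The plan is to bound $\delta^{q-1}\TTV{I}{\ab}{\delta}$ uniformly in $\delta>0$ and over all partitions, then take the $q$-th root. For any partition $a \le t_0 < t_1 < \cdots < t_n \le b$, I decompose each increment of $I(t):=\int_a^t[f(s)-f(a)]\,\mathrm{d}g(s)$ as $I(t_i) - I(t_{i-1}) = A_i + B_i$, where
\[
A_i := \int_{t_{i-1}}^{t_i}[f(s)-f(t_{i-1})]\,\mathrm{d}g(s),\qquad B_i := [f(t_{i-1})-f(a)][g(t_i)-g(t_{i-1})].
\]
Using the elementary $(x+y-\delta)_+ \le (x-\delta/2)_+ + (y-\delta/2)_+$, I split the truncated-variation sum into an $A$-part and a $B$-part, which I treat separately.

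For the $B$-part, the estimate $\|B_i\|_V \le \|f\|_{\osc,\ab}\,\|g(t_i)-g(t_{i-1})\|_E$ followed by a rescaling to identify the truncated variation of $g$ at level $\delta/(2\|f\|_{\osc,\ab})$, combined with (\ref{TV_estimate}), yields
\[
\delta^{q-1}\sum_i(\|B_i\|_V - \delta/2)_+ \le 2^{q-1}\|f\|_{\osc,\ab}^{q}\|g\|_{q-\TV,\ab}^{q}.
\]
The excess exponent $q$ on $\|f\|_{\osc,\ab}$ is converted into the desired $\|f\|_{\osc,\ab}^{q+p-pq}\|f\|_{p-\TV,\ab}^{pq-p}$ via $\|f\|_{\osc,\ab}\le c_p^{-1/p}\|f\|_{p-\TV,\ab}$, which follows from (\ref{eq:supremum}) and (\ref{infinite_norm_est}).

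For the $A$-part, the central pointwise inequality is $(x-\delta/2)_+ \le 2^{q-1}x^q\delta^{1-q}$ valid for $x\ge 0$ and $q\ge 1$ (trivial for $x\le \delta/2$; for $x>\delta/2$ the inequality $x \le 2^{q-1}x^q\delta^{1-q}$ is equivalent to $\delta\le 2x$). Combined with the superadditivity of $u\mapsto u^q$ on $[0,\infty)$, it gives
\[
\delta^{q-1}\sum_i(\|A_i\|_V - \delta/2)_+ \le 2^{q-1}\sum_i\|A_i\|_V^q \le 2^{q-1}\Bigl(\sum_i\|A_i\|_V\Bigr)^{\!q},
\]
reducing the task to estimating $\sum_i\|A_i\|_V$. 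I would apply Theorem \ref{main} (equivalently, Lemma \ref{lema_partition} in the limit) on each subinterval $[t_{i-1},t_i]$, but with a \emph{single}, globally chosen pair of sequences $(\eta_k)_{k\ge -1}$, $(\theta_k)_{k\ge 0}$ --- the ones used in the proof of Corollary \ref{corol_Young}, in the $\mathcal{U}^p$-version described in the remark following that corollary --- with $\eta_{-1}:=\tfrac{1}{2}\|f\|_{\osc,\ab}$, which dominates every per-subinterval quantity $\tfrac{1}{2}\sup_{t_{i-1}\le s\le t_i}\|f(s)-f(t_{i-1})\|_{L(E,V)}$. Summing the per-subinterval bounds and using the superadditivity (\ref{eq:superadditivity}) of the truncated variation collapses $\sum_i \TTV{g}{[t_{i-1},t_i]}{\theta_k/4}$ to $\TTV{g}{\ab}{\theta_k/4}$ (and similarly for $f$), which yields $\sum_i\|A_i\|_V \le S \le C_{p,q}\|f\|_{p-\TV,\ab}^{p-p/q}\|f\|_{\osc,\ab}^{1+p/q-p}\|g\|_{q-\TV,\ab}$.

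Combining the $A$- and $B$-parts and taking the $q$-th root produces the announced bound with $D_{p,q}$ of the order of $(C_{p,q}^q + c_p^{-(q-1)})^{1/q}$. The main obstacle is that neither $\|\cdot\|_{p-\TV,[\cdot]}^p$ nor $\|\cdot\|_{q-\TV,[\cdot]}^q$ is superadditive as an interval functional (Remark \ref{superadditivity}); consequently, the naive strategy of applying the Lo\'{e}ve--Young bound of Corollary \ref{corol_Young} on each subinterval and summing at the $q$-th power fails, since one cannot dominate $\sum_i\|g\|_{q-\TV,[t_{i-1},t_i]}^q$ by $\|g\|_{q-\TV,\ab}^q$. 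The workaround is to work at the level of the \emph{raw} truncated variation, which \emph{is} superadditive: sum the first powers $\|A_i\|_V$ using the detailed Lemma \ref{lema_partition}-type estimate with uniform sequences, and only then reinstate the $q$-th power at the end through the pointwise inequality $(x-\delta/2)_+\le 2^{q-1}x^q\delta^{1-q}$ together with the superadditivity of $u\mapsto u^q$.
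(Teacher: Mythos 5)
Your proposal is correct, and it reaches the theorem by a genuinely different route than the paper. The paper proves the result through Lemma \ref{lema1}: it telescopes the indefinite integral $\int_a^{\cdot}[f-f(a)]\,\mathrm{d}g$ into a finite sum of integrals against the piecewise-linear approximants $g_k^{\varepsilon_k}$ of Theorem \ref{Fact_multi_dim} plus a uniformly small remainder, then invokes the variational characterisation of the truncated variation (that $\TTV{I}{\ab}{\gamma}$ is dominated by the total variation of any uniform $\gamma/2$-approximation of $I$) to bound $\TTV{I}{\ab}{\gamma}$ for one specific level $\gamma$ per choice of the free parameter $\beta$, and finally tunes $\beta$ so that every level $\gamma>0$ is covered. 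You instead work partition-wise: the increment decomposition $I(t_i)-I(t_{i-1})=A_i+B_i$, the splitting $(x+y-\delta)_+\le(x-\delta/2)_++(y-\delta/2)_+$, the rescaling of the $B$-part into a truncated variation of $g$ at level $\delta/(2\Vert f\Vert_{\text{osc},\ab})$, and for the $A$-part the pointwise bound $(x-\delta/2)_+\le 2^{q-1}x^q\delta^{1-q}$ together with $\sum_i\Vert A_i\Vert_V^q\le\bigl(\sum_i\Vert A_i\Vert_V\bigr)^q$ and the superadditivity of the raw truncated variation to collapse $\sum_i\Vert A_i\Vert_V$ into the single global quantity $S$ of Corollary \ref{corol_Young} (with a globally chosen $\eta_{-1}=\tfrac12\Vert f\Vert_{\text{osc},\ab}$ dominating each local one, and $\Vert f\Vert_{\text{osc},\ab}\le c_p^{-1/p}\Vert f\Vert_{p-\text{TV},\ab}$ to repair the exponents in the $B$-part, both exponents $pq-p$ and $q+p-pq$ being positive under $p^{-1}+q^{-1}>1$). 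Every step checks out, and you correctly diagnose the obstacle (failure of superadditivity of $\Vert\cdot\Vert_{q-\text{TV}}^q$, Remark \ref{superadditivity}) that rules out the naive subinterval-by-subinterval summation at the $q$-th power. What your argument buys is a proof that bypasses Lemma \ref{lema1} and its approximation machinery entirely, at the price of a different (likely larger) universal constant $D_{p,q}$; what the paper's route buys is Lemma \ref{lema1} itself as a reusable intermediate statement controlling $\TTV{I}{\ab}{\gamma}$ at prescribed levels $\gamma$.
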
 

One more application of Theorem \ref{thm_integral} may be the following. Assume that $y:\left[a, b \right] \ra E$ is a solution of the equation of the form 
\begin{equation} \label{int_eq}
y(t) = x_0 + \int_a^t F\rbr{s,y(s)} \dd x(s), 
\end{equation}
where $x\in {\cal U}^{q}\rbr{[a,b],E}$ and $F\rbr{\cdot,y(\cdot)} \in {\cal U}^{p}\rbr{[a,b],L(E,V)}$ for some $p>1,$ $q>1$ such that $p^{-1}+q^{-1} >1;$ from these and Theorem \ref{thm_integral}  we will obtain that $y \in {\cal U}^{q}\rbr{[a,b],E}.$  

In our case we have no longer the supperadditivity property
of the functional $\left\Vert \cdot\right\Vert _{p-\text{TV},\left[a,b\right]}^{p}$
as the function of interval (see Remark \ref{superadditivity}), hence
the method of the proof of Theorem \ref{thm_integral} will be different
than the proofs of related estimates in \cite{Lyons:1994}. It will
be similar to the proof of Corollary \ref{corol_Young}. We will need
the following lemma. 
\begin{lema} \label{lema1} Let $f:\ab\ra L\left(E,V\right)$
and $g:\ab\ra E,$ be two regulated functions which have no common
points of discontinuity and $\delta_{0}\geq\delta_{1}\geq\ldots,$
$\varepsilon_{0}\geq\varepsilon_{1}\geq\ldots$ be two sequences of
non-negative numbers, such that $\delta_{k}\downarrow0,$ $\varepsilon_{k}\downarrow0$
as $k\rightarrow+\infty.$ Assume that for $\delta_{-1}:=\frac{1}{2}\sup_{a\leq t\leq b}\left\Vert f\left(t\right)-f\left(a\right)\right\Vert_{L(E,V)}$
and 
\[
S=4\sum_{k=0}^{+\infty}3^{k}\delta_{k-1}\cdot\TTVemph g{\left[a,b\right]}{\varepsilon_{k}}+4\sum_{k=0}^{\infty}3^{k}\varepsilon_{k}\cdot\TTVemph f{\left[a,b\right]}{\delta_{k}}
\]
we have $S<+\ns.$ Defining 
\[
\gamma:=8\sum_{k=0}^{+\infty}3^{k}\varepsilon_{k}\cdot\TTVemph f{\left[a,b\right]}{\delta_{k}}
\]
we get 
\begin{align*}
\TTVemph{\int_{a}^{\cdot}\left[f\left(s\right)-f\left(a\right)\right]\mathrm{d}g\left(s\right)}{\left[a,b\right]}{\gamma} & \leq 2\sum_{k=0}^{+\infty}3^{k}\delta_{k-1}\cdot\TTVemph g{\left[a,b\right]}{\varepsilon_{k}}.
\end{align*}
\end{lema}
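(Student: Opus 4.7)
The plan is to fix an arbitrary partition $\pi = \{a = u_0 < u_1 < \ldots < u_N \le b\}$ of $[a,b]$, bound the sum $T(\pi) := \sum_{i=1}^N \max\{\|I(u_i) - I(u_{i-1})\|_V - \gamma, 0\}$ where $I(t) := \int_a^t [f(s) - f(a)]\,\mathrm{d}g(s)$, and take the supremum over $\pi$ to conclude. The starting point is the algebraic decomposition
\[
I(u_i) - I(u_{i-1}) = \Bigl(\int_{u_{i-1}}^{u_i} f\,\mathrm{d}g - f(u_{i-1})[g(u_i) - g(u_{i-1})]\Bigr) + [f(u_{i-1}) - f(a)][g(u_i) - g(u_{i-1})].
\]
I would apply Theorem \ref{main} to the first parenthesised term on the subinterval $[u_{i-1}, u_i]$, using the given sequences $\{\delta_k\}_{k \ge 0}$ and $\{\varepsilon_k\}_{k \ge 0}$, and observe that the local quantity $\delta_{-1}^{(i)} := \tfrac12\sup_{u_{i-1} \le t \le u_i}\|f(t) - f(u_{i-1})\|_{L(E,V)}$ satisfies $\delta_{-1}^{(i)} \le 2\delta_{-1}$ by the triangle inequality. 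This produces a norm bound of the form $G_i + F_i$, with $G_i$ collecting the $g$-variational contributions (whose $k=0$ coefficient can be bounded by $2\delta_{-1}$) and $F_i$ collecting the $f$-variational contributions on $[u_{i-1}, u_i]$. The second bracketed term has norm at most $2\delta_{-1}\|g(u_i) - g(u_{i-1})\|_E$, directly from the defining supremum of $\delta_{-1}$.

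Next I would invoke the elementary inequality $\max\{a_1 + a_2 + a_3 - \gamma_1 - \gamma_2 - \gamma_3, 0\} \le \sum_{j=1}^3 \max\{a_j - \gamma_j, 0\}$, valid for any nonnegative $\gamma_j$, with the following three-way split of $\gamma$. First, take $\gamma_2$ equal to the full $f$-variational sum $4\sum_{k \ge 0} 3^k \varepsilon_k \TTV{f}{[a,b]}{\delta_k}$: by superadditivity of truncated variation in its interval argument, $F_i \le \sum_j F_j \le \gamma_2$, hence $\max\{F_i - \gamma_2, 0\} = 0$. Second, take $\gamma_3 := 2\delta_{-1}\varepsilon_0$, so that $\max\{2\delta_{-1}\|g(u_i) - g(u_{i-1})\|_E - \gamma_3, 0\} = 2\delta_{-1}\max\{\|g(u_i) - g(u_{i-1})\|_E - \varepsilon_0, 0\}$; summed over $i$, this is at most $2\delta_{-1}\,\TTV{g}{[a,b]}{\varepsilon_0}$, which realises the $k=0$ term on the target right-hand side. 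Third, take $\gamma_1 := \gamma - \gamma_2 - \gamma_3$, nonnegative since $\gamma = 8\sum 3^k \varepsilon_k \TTV{f}{[a,b]}{\delta_k} = 2\gamma_2$; the residual contribution satisfies $\sum_i \max\{G_i - \gamma_1, 0\} \le \sum_i G_i$, which by superadditivity of $\delta \mapsto \TTV{g}{\cdot}{\delta}$ as a function of the interval telescopes to a multiple of $\sum_{k \ge 1} 3^k \delta_{k-1}\,\TTV{g}{[a,b]}{\varepsilon_k}$, giving the remaining $k \ge 1$ terms.

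The main technical obstacle is the precise matching of the constants: the factor $2$ on the right-hand side of the lemma and the factor $8$ in the definition of $\gamma$ are tuned so that the quantity $\delta_{-1}$, which enters the analysis in two distinct places — through $\delta_{-1}^{(i)} \le 2\delta_{-1}$ in the $k=0$ component of $G_i$, and through $\|f(u_{i-1}) - f(a)\|_{L(E,V)} \le 2\delta_{-1}$ in the second bracketed term — can be combined with the $\varepsilon_0$-truncation so that the overall $k=0$ coefficient on the right-hand side is exactly $2\delta_{-1}$ rather than something larger. Once this bookkeeping is arranged, summing the three contributions and taking the supremum over $\pi$ is routine and yields the claim.
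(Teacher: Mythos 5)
Your strategy is genuinely different from the paper's, and it has a gap that I do not see how to close. The paper never estimates the truncated variation of the indefinite integral partition by partition. Instead it iterates integration by parts against the piecewise-linear approximants $g_k^{\varepsilon_k}$, $f_k^{\delta_k}$ of Theorem \ref{Fact_multi_dim} to write $\int_a^{\cdot}[f-f(a)]\,\dd g$ as $\sum_{k=0}^{r-1}\int_a^{\cdot}[f_k-f_k(a)]\,\dd g_k^{\varepsilon_k}$ plus remainder terms that are \emph{uniformly} bounded by $\gamma/2$; it then bounds the \emph{total} variation of this single global approximant by $2\sum_k 3^k\delta_{k-1}\TTV{g}{[a,b]}{\varepsilon_k/4}$ and invokes the lower-bound half of Theorem \ref{Fact_multi_dim}, namely $\TTV{F}{[a,b]}{\gamma}\le\inf_{h\in B(F,\gamma/2)}\TTV{h}{[a,b]}{}$. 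That mechanism is what produces the coefficient $2$ (coming from $\|f_k-f_k(a)\|_{\ns}\le 2\delta_{k-1}$ times the total variation of $g_k^{\varepsilon_k}$) and what makes the budget $\gamma/2$ exactly the size of the discarded remainders, with no per-increment accounting needed.

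The concrete gap in your version is the claim that $\gamma_1=\gamma-\gamma_2-\gamma_3\ge 0$. You justify $\gamma=2\gamma_2$ but never verify $\gamma_2\ge\gamma_3=2\delta_{-1}\varepsilon_0$, and this can fail: $\gamma_2=4\sum_k 3^k\varepsilon_k\TTV{f}{[a,b]}{\delta_k}$ is controlled only by the truncated variations of $f$ at the levels $\delta_k$, whereas $\gamma_3$ involves the oscillation of $f$ and the scale $\varepsilon_0$. If, say, $\delta_0$ exceeds $\left\Vert f\right\Vert _{\osc,[a,b]}$ (so the early terms of $\gamma_2$ vanish) and $3^k\varepsilon_k$ decays fast, one gets $\gamma_2\ll 2\delta_{-1}\varepsilon_0$ and the three-way split is inadmissible. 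A second, quantitative problem is that applying Theorem \ref{main} on each $[u_{i-1},u_i]$ yields the $g$-contribution $4\sum_k 3^k\delta^{(i)}_{k-1}\TTV{g}{[u_{i-1},u_i]}{\varepsilon_k/4}$ with $\delta^{(i)}_{-1}\le 2\delta_{-1}$; after summing over $i$ the $k=0$ coefficient is up to $8\delta_{-1}$, to which your $B_i$ term adds another $2\delta_{-1}\TTV{g}{[a,b]}{\varepsilon_0}$ --- so the right-hand side you can actually reach is a strict multiple of the claimed $2\sum_k 3^k\delta_{k-1}\TTV{g}{[a,b]}{\varepsilon_k}$, and nothing in your bookkeeping brings it down to the stated constant. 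The route through a uniform approximant and the variational characterisation of $\TTV{\cdot}{[a,b]}{\gamma}$ avoids both difficulties; I would recommend restructuring the argument along those lines.
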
 \begin{proof} We proceed similarly as in the proof of
Lemma \ref{lema_partition}. Define $g_{0}=g,$ $f_{0}=f,$ $g_{1}:=g_{0}-g_{0}^{\varepsilon_{0}},$
$f_{1}:=f_{0}-f_{0}^{\delta_{0}},$ where $g_{0}^{\varepsilon_{0}}$
is piecewise linear, with possible discontinuities only at the points
where $g$ is discontinuous, and such that 
\[
\left\Vert g_{0}-g_{0}^{\varepsilon_{0}}\right\Vert _{\infty,\ab}\leq\varepsilon_{0}\mbox{ and }\TTV{g_{0}^{\varepsilon_{0}}}{\left[a,b\right]}0\le2\TTV{g_{0}}{\left[a,b\right]}{\varepsilon_{0}/4},
\]
and, similarly, $f_{0}^{\delta_{0}}$ is piecewise linear, with possible discontinuities
only at the points where $f$ is discontinuous, and such that 
\[
\left\Vert f_{0}-f_{0}^{\delta_{0}}\right\Vert _{\infty,\ab}\leq\delta_{0}\mbox{ and }\TTV{f_{0}^{\delta_{0}}}{\left[a,b\right]}0\le2\TTV{f_{0}}{\left[a,b\right]}{\delta_{0}/4}.
\]
For $k=2,3,\ldots,$ $g_{k}:=g_{k-1}-g_{k-1}^{\varepsilon_{k-1}},$
$f_{k}:=f_{k-1}-f_{k-1}^{\delta_{k-1}}$ are defined similarly as
$g_{1}$ and $f_{1}.$ By the linearity of the Riemann-Stieltjes integral
with respect to the integrator, integrating by parts, for $t\in\left[a,b\right],$
$r=1,2,\ldots,$ we have 
\begin{align}
& \int_{a}^{t}\left[f\left(s\right)-f\left(a\right)\right]\mathrm{d}g\left(s\right) \nonumber \\ & =\int_{a}^{t}\left[f_{0}\left(s\right)-f_{0}\left(a\right)\right]\mathrm{d}g^{\varepsilon_{0}}\left(s\right)+\int_{a}^{t}\left[f_{0}\left(s\right)-f_{0}\left(a\right)\right]\mathrm{d}g_{1}\left(s\right)\nonumber \\
 & =\int_{a}^{t}\left[f_{0}\left(s\right)-f_{0}\left(a\right)\right]\mathrm{d}g_{0}^{\varepsilon_{0}}\left(s\right)+\int_{a}^{t}\mathrm{d}f_{0}\left(s\right)\left[g_{1}\left(t\right)-g_{1}\left(s\right)\right]\nonumber \\
 & =\int_{a}^{t}\left[f_{0}\left(s\right)-f_{0}\left(a\right)\right]\mathrm{d}g_{0}^{\varepsilon_{0}}\left(s\right)+\int_{a}^{t}\mathrm{d}f_{0}^{\delta_{0}}\left(s\right)\left[g_{1}\left(t\right)-g_{1}\left(s\right)\right]\nonumber \\
 & \quad+\int_{a}^{t}\mathrm{d}f_{1}\left(s\right)\left[g_{1}\left(t\right)-g_{1}\left(s\right)\right]\nonumber \\
 & =\int_{a}^{t}\left[f_{0}\left(s\right)-f_{0}\left(a\right)\right]\mathrm{d}g_{0}^{\varepsilon_{0}}\left(s\right)+\int_{a}^{t}\mathrm{d}f_{0}^{\delta_{0}}\left(s\right)\left[g_{1}\left(t\right)-g_{1}\left(s\right)\right]\nonumber \\
 & \quad+\int_{a}^{t}\left[f_{1}\left(s\right)-f_{1}\left(a\right)\right]\mathrm{d}g_{1}\left(s\right)=\ldots\nonumber \\
 & =\sum_{k=0}^{r-1}\left(\int_{a}^{t}\left[f_{k}\left(s\right)-f_{k}\left(a\right)\right]\mathrm{d}g_{k}^{\varepsilon_{k}}\left(s\right)+\int_{a}^{t}\mathrm{d}f_{k}^{\delta_{k}}\left(s\right)\left[g_{k+1}\left(t\right)-g_{k+1}\left(s\right)\right]\right)\nonumber \\
 & \quad+\int_{a}^{t}\left[f_{r}\left(s\right)-f_{r}\left(a\right)\right]\mathrm{d}g_{r}\left(s\right).\label{eq:summaa}
\end{align}
By Theorem \ref{main}, we easily estimate that 
\begin{align}
& \left\Vert \int_{a}^{t}\left[f_{r}\left(s\right)-f_{r}\left(a\right)\right]\mathrm{d}g_{r}\left(s\right)\right\Vert _{V}\nonumber  \\ 
& \leq4 \sum_{k=r}^{+\infty}3^{k}\delta_{k-1}\cdot\TTV g{\left[a,t\right]}{\varepsilon_{k}/4}+4\sum_{k=r}^{\infty}3^{k}\varepsilon_{k}\cdot\TTV f{\left[a,t\right]}{\delta_{k}/4}\label{eq:tv_gamma_est1}
\end{align}
for $r=1,2,\ldots.$ Moreover, for $k=0,1,\ldots,$ similarly as in
the proof of Lemma \ref{lema_partition}, we estimate 
\begin{equation}
\left\Vert \int_{a}^{t}\mathrm{d}f_{k}^{\delta_{k}}\left(s\right)\left[g_{k+1}\left(t\right)-g_{k+1}\left(s\right)\right]\right\Vert _{V}\leq2\varepsilon_{k}\TTV{f_{k}^{\delta_{k}}}{\left[a,t\right]}0\leq2\cdot3^{k}\varepsilon_{k}\TTV f{\left[a,t\right]}{\delta_{k}/4},\label{eq:tv_gamma_est2}
\end{equation}
and 
\begin{align}
\TTV{\int_{a}^{\cdot}\left[f_{k}\left(s\right)-f_{k}\left(a\right)\right]\mathrm{d}g_{k}^{\varepsilon_{k}}\left(s\right)}{\left[a,b\right]}0 & \leq2\delta_{k-1}\TTV{g_{k}^{\varepsilon_{k}}}{\left[a,b\right]}0\nonumber \\
 & \leq2\cdot3^{k}\delta_{k-1}\cdot\TTV g{\left[a,b\right]}{\varepsilon_{k}/4}.\label{eq:tv_gamma_est3}
\end{align}
(Notice that for the function $F_{k}(t):=\int_{a}^{t}\left[g_{k+1}\left(t\right)-g_{k+1}\left(s\right)\right]\mathrm{d}f_{k}^{\delta_{k}}\left(s\right)$
we could not obtain a similar estimate as (\ref{eq:tv_gamma_est3}).
This is due to the fact that $F_{k}(t_{2})-F_{k}(t_{1})$ can not
be expressed as the integral $\int_{t_{1}}^{t_{2}}\left[g_{k+1}\left(t\right)-g_{k+1}\left(s\right)\right]\mathrm{d}f_{k}^{\delta_{k}}\left(s\right)$
.) Defining 
\[
\gamma\left(r\right):=8\sum_{k=r}^{+\infty}3^{k}\delta_{k-1}\cdot\TTV g{\left[a,b\right]}{\varepsilon_{k}}+8\sum_{k=0}^{+\ns}3^{k}\varepsilon_{k}\cdot\TTV f{\left[a,b\right]}{\delta_{k}/4},
\]
from (\ref{eq:summaa}), (\ref{eq:tv_gamma_est2}) and (\ref{eq:tv_gamma_est1})
we get 
\begin{align*}
 & \left\Vert \int_{a}^{t}\left[f\left(s\right)-f\left(a\right)\right]\mathrm{d}g\left(s\right)-\sum_{k=0}^{r-1}\int_{a}^{t}\left[f_{k}\left(s\right)-f_{k}\left(a\right)\right]\mathrm{d}g_{k}^{\varepsilon_{k}}\left(s\right)\right\Vert _{V}\\
 & \leq\sum_{k=0}^{r-1}\left\Vert \int_{a}^{t}\mathrm{d}f_{k}^{\delta_{k}}\left(s\right)\left[g_{k+1}\left(t\right)-g_{k+1}\left(s\right)\right]\right\Vert _{V}+\left\Vert \int_{a}^{t}\left[f_{r}\left(s\right)-f_{r}\left(a\right)\right]\mathrm{d}g_{r}\left(s\right)\right\Vert _{V}\\
 & \leq\frac{1}{2}\gamma\left(r\right)
\end{align*}
for any $t\in\left[a,b\right].$ Let us notice that by the very definition
of the truncated variation, 
\[
\TTV{\int_{a}^{\cdot}\left[f\left(s\right)-f\left(a\right)\right]\mathrm{d}g\left(s\right)}{\left[a,t\right]}{\gamma}
\]
is bouned from above by the variation of any function approximating the indefinite integral 
$\int_{a}^{\cdot}\left[f\left(s\right)-f\left(a\right)\right]\mathrm{d}g\left(s\right)$
with accuracy $\gamma/2.$ By this variational property of the truncated
variation and by (\ref{eq:tv_gamma_est3}) we get 
\begin{align*}
& \TTV{\int_{a}^{\cdot}\left[f\left(s\right)-f\left(a\right)\right]\mathrm{d}g\left(s\right)}{\left[a,b\right]}{\gamma\left(r\right)} \\ & \quad \quad \quad  \leq \TTV{\sum_{k=0}^{r-1}\int_{a}^{\cdot}\left[f_{k}\left(s\right)-f_{k}\left(a\right)\right]\mathrm{d}g_{k}^{\varepsilon_{k}}\left(s\right)}{\left[a,b\right]}0\\
& \quad \quad \quad \leq  2\sum_{k=0}^{r-1}3^{k}\delta_{k-1}\cdot\TTV g{\left[a,b\right]}{\varepsilon_{k}/4}.
\end{align*}
Proceeding with $r$ to $+\infty$ we get the assertion. \end{proof} 

Now we are ready to prove Theorem \ref{thm_integral}. 

\begin{proof} Let $\gamma>0.$ We choose 
\[ \alpha=\frac{\sqrt{(q-1)(p-1)}+1}{2}, \quad \delta_{-1}=\frac{1}{2}\sup_{a\leq t\leq b}\left\Vert f\left(t\right)-f\left(a\right)\right\Vert _{L\left(E,V\right)}
\]
and define $\beta$ by the equality
\[
2\cdot4^{p}\left(\sum_{k=0}^{+\infty}3^{k+1-p-\alpha\left(1-\alpha\right)\left(\alpha^{2}/\left[\left(q-1\right)\left(p-1\right)\right]\right)^{k}/\left(q-1\right)}\right)\left\Vert f\right\Vert _{p-\text{TV},\left[a,b\right]}^{p}\delta_{-1}^{1-p}\beta=\gamma.
\]
Now, for $k=0,1,\ldots,$ we define 
\[
\delta_{k-1}=3^{-\left(\alpha^{2}/\left[\left(q-1\right)\left(p-1\right)\right]\right)^{k}+1}\delta_{-1},
\]
\[
\varepsilon_{k}=3^{-\left(\alpha^{2}/\left[\left(q-1\right)\left(p-1\right)\right]\right)^{k}\alpha/\left(q-1\right)}\beta.
\]
Using (\ref{TV_estimate}), similarly as in the proof of Corollary
\ref{corol_Young} { we estimate 
\begin{align*}
& \sum_{k=0}^{+\infty}3^{k}\delta_{k-1}\cdot\TTV g{\left[a,b\right]}{\varepsilon_{k}/4} \\ & \leq4^{q-1}\left(\sum_{k=0}^{+\infty}3^{k+1-\left(1-\alpha\right)\left(\alpha^{2}/\left[\left(q-1\right)\left(p-1\right)\right]\right)^{k}}\right)\left\Vert g\right\Vert _{q-\text{TV},\left[a,b\right]}^{q}\delta_{-1}\beta^{1-q}
\end{align*}
and 
\begin{align*}
\tilde{\gamma}: & =8\sum_{k=0}^{+\infty}3^{k}\varepsilon_{k}\cdot\TTV f{\left[a,b\right]}{\delta_{k}/4}\\
 & \leq2\cdot4^{p}\left(\sum_{k=0}^{+\infty}3^{k+1-p-\alpha\left(1-\alpha\right)\left(\alpha^{2}/\left[\left(q-1\right)\left(p-1\right)\right]\right)^{k}/\left(q-1\right)}\right)\left\Vert f\right\Vert _{p-\text{TV},\left[a,b\right]}^{p}\delta_{-1}^{1-p}\beta\\
 & =\gamma.
\end{align*}
By the monotonicity of the truncated variation, Lemma \ref{lema1}
and the last two estimates we get 
\begin{align*}
 & \TTV{\int_{a}^{\cdot}\left[f\left(s\right)-f\left(a\right)\right]\mathrm{d}g\left(s\right)}{\left[a,b\right]}{\gamma}\leq\TTV{\int_{a}^{\cdot}\left[f\left(s\right)-f\left(a\right)\right]\mathrm{d}g\left(s\right)}{\left[a,b\right]}{\tilde{\gamma}}\\
 & \leq4\sum_{k=0}^{+\infty}3^{k}\delta_{k-1}\cdot\TTV g{\left[a,b\right]}{\varepsilon_{k}}\\
 & \leq4^{q}\left(\sum_{k=0}^{+\infty}3^{k+1-\left(1-\alpha\right)\left(\alpha^{2}/\left[\left(q-1\right)\left(p-1\right)\right]\right)^{k}}\right)\left\Vert g\right\Vert _{q-\text{TV},\left[a,b\right]}^{q}\delta_{-1}\beta^{1-q}\\
 & =\tilde{D}_{p,q}\left\Vert f\right\Vert _{p-\text{TV},\left[a,b\right]}^{pq-p}\left\Vert f\right\Vert _{\text{osc},\left[a,b\right]}^{p+q-pq}\left\Vert g\right\Vert _{q-\text{TV},\left[a,b\right]}^{q}\gamma^{1-q},
\end{align*}
where 
\begin{align*}
\tilde{D}_{p,q}= & 4^{q}\left(\sum_{k=0}^{+\infty}3^{k+1-\left(1-\alpha\right)\left(\alpha^{2}/\left[\left(q-1\right)\left(p-1\right)\right]\right)^{k}}\right)\\
 & \times\left(2\cdot4^{p}\left(\sum_{k=0}^{+\infty}3^{k+1-p-\alpha\left(1-\alpha\right)\left(\alpha^{2}/\left[\left(q-1\right)\left(p-1\right)\right]\right)^{k}/\left(q-1\right)}\right)\right)^{q-1}.
\end{align*}
} From this and the definition of $\left\Vert \cdot\right\Vert _{q-\text{TV},\left[a,b\right]}$
we get
\[
\left\Vert \int_{a}^{\cdot}\left[f\left(s\right)-f\left(a\right)\right]\mathrm{d}g\left(s\right)\right\Vert _{q-\text{TV},\left[a,b\right]}\leq D_{p,q}\left\Vert f\right\Vert _{p-\text{TV},\left[a,b\right]}^{p-p/q}\left\Vert f\right\Vert _{\text{osc},\left[a,b\right]}^{p/q+1-p}\left\Vert g\right\Vert _{q-\text{TV},\left[a,b\right]},
\]
where $D_{p,q}=\tilde{D}_{p,q}^{1/q}.$ \end{proof}

\bibliographystyle{plain}
\bibliography{C:/biblio/biblio}
\end{document}